\newtheorem{thm}{Theorem}[section] 
\newtheorem{lem}[thm]{Lemma} 
\newtheorem{prop}[thm]{Proposition}
\newtheorem{cor}[thm]{Corollary}
\newtheorem*{thmA}{Theorem A}
\newtheorem*{thmB}{Theorem B}
\newtheorem*{thmC}{Theorem C}
\theoremstyle{remark}
\newtheorem{remark}[thm]{Remark}
\def\qq{\mathbb{Q}}
\def\pp{\mathbb{P}}
\def\rr{\mathbb{R}}
\def\zz{\mathbb{Z}}
\def\xx{\mathfrak{X}}
\def\ddc{\, dd^c \, }
\numberwithin{equation}{section}
\begin{document}

\title[Local heights on Galois covers of the projective line]{Local heights on Galois covers of the projective line}

\author{Robin de Jong}

\subjclass[2010]{Primary 11G30, Secondary 11G50, 11J95}

\begin{abstract} Let $X$ be a smooth projective curve of positive genus defined
over a number field $K$. Assume given a Galois covering map $x \colon X \to \pp^1_K$ and a place $v$ of $K$. We introduce a local canonical height on
$X(\overline{K}_v)$ associated to $x$ as an integral with logarithmic integrand, generalizing Tate's local N\'eron function on an elliptic curve. The resulting global height can be viewed as a `Mahler measure' associated to $x$. We prove that the local canonical height can be obtained by averaging, and taking a limit, over divisors of higher order Weierstrass points on $X$. This generalizes previous results by Everest-n\'i Fhlathu\'in and Szpiro-Tucker. Our construction of the local canonical height is an application of potential theory on Berkovich curves in the presence of a canonical measure.
\end{abstract}

\maketitle
\thispagestyle{empty}

\section{Introduction}

Let $K$ be a number field. Let $X$ be a smooth projective curve of positive
genus $g$ defined over $K$, endowed with a Galois covering map $x \colon X \to \pp^1_K$ of degree $N$ to the projective line over $K$. Assume that $x$ is totally ramified at some point $o \in X(K)$. The basic example that we have in mind is that of a hyperelliptic curve $X$ endowed with a hyperelliptic map. In this paper we introduce and study, for each place $v$ of $K$, a local height function:
\[ \lambda_v \colon X(\overline{K}_v) \setminus \{o\} \to \rr  \]
associated to $x$, generalizing the well known N\'eron-Tate local height function on an elliptic curve over $K$. Here $K_v$ denotes the completion of $K$ at $v$, and $\overline{K}_v$ an algebraic closure of $K_v$.

Fix a $v$-adic absolute value $|\cdot|_v$ on $K_v$, and assume that a coordinate function has been chosen on $\pp^1_K$ such that $o$ maps to $\infty$ under $x$. Our local height function will be obtained as an integral with a logarithmic integrand:
\[ \lambda_v(p) = \frac{1}{N} \int_{X_v} \log |x-x(p)|_v \, \mu_v \,  , \]
where $X_v$ is a canonical analytic space associated to $X$ at $v$ and $\mu_v$ a
canonical probability measure on $X_v$. To be precise, $X_v$ will be the Berkovich analytic
space associated to $X \otimes \hat{\overline{K}}_v$ if $v$ is non-archimedean,
and $X_v$ will be the complex analytic space $X(\overline{K}_v)$ if $v$ is
archimedean. The measure $\mu_v$ will be the canonical Arakelov measure on
$X_v$, to be defined in Section~\ref{potential} below.
\begin{thmA} The function $ \lambda_v \colon X(\overline{K}_v) \setminus \{o\} \to \rr $ is a local Weil height with respect to the divisor $o$ on $X$. The difference $\lambda_v(p) - \frac{1}{N} \log |x(p)|_v$ tends to zero as $p$ tends to $o$ on $X(\overline{K}_v)$.
\end{thmA}
For $p \in X(K) \setminus \{o\} $ we will see that $\lambda_v(p)$ vanishes for almost all $v$. Let $\overline{K}$ be an algebraic closure of $K$, and let $M_K$ be the set of places of $K$. The function $h_x \colon X(K) \setminus \{o\} \to \rr$ defined via:
\[ h_x(p) = \frac{1}{[K \colon \qq]} \sum_{v \in M_K} n_v \, \lambda_v(p) =
\frac{1}{[K \colon \qq]} \frac{1}{N} \sum_{v \in M_K} n_v \int_{X_v} \log |x-x(p)|_v \, \mu_v \, , \]
where the $n_v$ are suitable local factors, extends to a global Weil height with respect to $o$ on $X(\overline{K})$. We note that $[K \colon \qq]$ times this height can be viewed as a `Mahler measure' associated to the map $x \colon X \to \pp^1_K$.

It will be no surprise that $h_x$ relates directly to the N\'eron-Tate height on the jacobian of $X$ via the Abel-Jacobi map based at $o$ (cf. Proposition \ref{NT}). Because of this connection, and the explicit nature of our local heights, the results in this paper may have an application to the actual calculation of N\'eron-Tate heights of rational points on jacobians. One possible starting point is the result in Theorem~C below. Another possible starting point is to try to develop a `formulaire', \`a la Tate's formulaire for elliptic curves (cf. \cite{sil2}, Chapter~VI.4), for $\lambda_v(p)$ based, if $v$ is non-archimedean, on the type of the reduction graph of $X$ at $v$, and the specialisation of $p$ onto that graph.

The function $\lambda_v$ is integrable against $\mu_v$. This leads to a local invariant:
\begin{equation} \label{localinvariant}
\int_{X_v} \lambda_v \, \mu_v = \frac{1}{N} \int_{X_v} \int_{X_v} \log |x(p)-x(q)|_v \mu_v(p) \mu_v(q) \end{equation}
associated to each $v$. This invariant vanishes for almost all $v$ and hence the global invariant:
\begin{equation} \label{globalinvariant} \sum_{v \in M_K} n_v \int_{X_v} \lambda_v \, \mu_v = \frac{1}{N} \sum_{v \in M_K} n_v \int_{X_v} \int_{X_v} \log |x(p)-x(q)|_v \mu_v(p) \mu_v(q)
\end{equation}
is well-defined.
One expects this invariant to be comparable with more `classical' invariants of $X$, such as the (admissible) self-intersection of the relative dualising sheaf of $X$ as studied in \cite{zh}. We prove for $x$ a hyperelliptic map that this is indeed the case:
\begin{thmB} Assume $X$ is a hyperelliptic curve of genus $g \geq 2$ and assume that $x \colon X \to \pp^1_K$ is a hyperelliptic map. Let $(\omega,\omega)_a$ be the admissible self-intersection of the relative dualising sheaf of $X$. Then the formula:
\[ \sum_{v \in M_K} n_v \int_{X_v} \lambda_v \, \mu_v = \frac{1}{2} \sum_{v \in M_K} n_v \int_{X_v} \int_{X_v} \log |x(p)-x(q)|_v \mu_v(p) \mu_v(q) = \frac{ (\omega,\omega)_a}{4g(g-1)} \]
holds.
\end{thmB}
In fact we prove that in general the invariant (\ref{globalinvariant}) equals $-(o,o)_a$, where $(o,o)_a$ is the admissible self-intersection of the point $o$. For hyperelliptic curves, or more generally curves such that $(2g-2)o$ is a canonical divisor, this specializes to the formula in Theorem B. For hyperelliptic curves we are able to analyze the local invariants (\ref{localinvariant}) in more detail and connect them to a local invariant that we introduced in an earlier paper \cite{dj}.

Our next result states that the local heights $\lambda_v(p)$ for $p \in X(K) \setminus \{o\}$ can be obtained by averaging, and taking a limit, over higher order Weierstrass points on~$X$. More precisely, let $X_n$ for $n \geq g$ be the divisor of Weierstrass points of the line bundle $\mathcal{O}_X(o)^{\otimes n+g-1}$ on $X$ (see Section \ref{theoC} for definitions). This $X_n$ is an effective divisor of degree $gn^2$, generalizing the divisor of $n$-torsion points on $(X,o)$ when $(X,o)$ is an elliptic curve. We usually view $X_n$ as a $\mathrm{Gal}(\overline{K}/K)$-invariant multi-set in $X(\overline{K})$.
\begin{thmC} Let $p$ be a point in $X(K) \setminus \{o\}$. Assume that $p$ is
not in $X_n$ for at least one $n \geq g$. Let $v$ be a place of $K$. Then:
\[
 \frac{1}{gn^2} \sum_{q \in X_n \atop x(q) \neq x(p),\infty} \log|x(p)-x(q)|_v
\longrightarrow \int_{X_v} \log|x - x(p)|_v \, \mu_v
\]
for some sequence of natural numbers $n$ tending to infinity. In the sum on the left hand side, the points in $X_n$ are counted with multiplicity.
\end{thmC}
It is clear that there can be at most finitely many points $p$ in $X(\overline{K})$ such that $p \in X_n$ for all $n \geq g$. In any case, apart from some `obvious' cases, such points appear to be extremely rare \cite{sv}. If $p \notin X_n$ for some $n \geq g$ then $p \notin X_n$ for infinitely many $n \geq g$. The sequence of these $n$ can be taken in Theorem C. It seems likely that Theorem C holds without the assumption that $p$ should not be in every $X_n$, and that the limit can be taken over all natural numbers.

A celebrated theorem of Mumford-Neeman \cite{ne} implies that, at least if $v$ is archimedean, the multi-sets $X_n$ are weakly equidistributed with respect to $\mu_v$. As far as we know, the equidistribution of Weierstrass points at non-archimedean $v$ has not yet been proven. In some sense the theorem of Mumford-Neeman `explains' Theorem C but it seems there is no direct implication. For example, the condition that the point $p$ should be algebraic seems to be essential.

In the case that $(X,o)$ is an elliptic curve, Theorem C occurs in an article by Everest and n\'i Fhlathu\'in \cite{enf}. A result similar in spirit to Theorem C but in the context of dynamical systems on $\pp^1_K$ has been proven by Szpiro and Tucker \cite{st2}. They prove that if $\varphi \colon \pp^1_K \to \pp^1_K$ is a non-constant rational map of degree $d>1$, and $p \in \pp^1(K) \setminus \{\infty \}$ is a rational point, then:
\[  \frac{1}{d^n} \sum_{q \in \mathrm{Fix}_n \atop q \neq p,\infty} \log|x(p)-x(q)|_v
\longrightarrow \int_{\pp^1_v} \log|x - x(p)|_v \, \mu_{\varphi,v} \]
as $n \to \infty$. Here $\mathrm{Fix}_n$ is the multi-set (divisor) of fixed points of the iterate $\varphi^n$ for all positive integers $n$, and $\mu_{\varphi,v}$ is the local canonical measure on $\pp^1_v$ associated to $\varphi$ by Call-Silverman \cite{cs}. The results of Everest-n\'i Fhlathu\'in and Szpiro-Tucker were proved using diophantine approximation in some form. We will prove Theorem C as a corollary of Faltings's diophantine approximation results on abelian varieties~\cite{fa}.

A possible application of Theorem C is a method to actually calculate, or at least approximate, the canonical height of $p$ by evaluating a sequence of polynomials--the `division polynomials' associated to the $X_n$--at $x(p)$. In the case of elliptic curves, this method is discussed in \cite{ewNY}. The next interesting case would be the case of genus two (or more generally, hyperelliptic) curves. The division polynomials associated to the $X_n$ are then known to satisfy some explicit recurrence relations that one could use \cite{ca}. The upshot of this method is that one does not need algebraic equations for the jacobian of $X$. We have stated our result as Theorem~\ref{EW} below.

Let us summarize the contents of this paper. In Section \ref{potential} we recall some useful facts from potential theory on Berkovich spaces associated to curves, mainly based on Thuillier's thesis \cite{th}. This is the `right' context to make sense of the integrals involved in defining $\lambda_v$ in the non-archimedean setting. We also connect the theory with Zhang's theory of local admissible pairing on a curve \cite{zh}.

In Section \ref{global} we extend the theory to the global setting and make the connection, following a classical result of Hriljac-Faltings, with the
N\'eron-Tate height on the jacobian.

From Section \ref{lambda} on we assume $X$ is endowed with a Galois covering map $x \colon X \to \pp^1_K$. We start by introducing $\lambda_v$ and proving Theorem A. We verify that for $(X,o)$ an elliptic curve, the function $\lambda_v$ coincides with Tate's classical N\'eron function. Finally we prove Theorem~B. 

We analyze the case of hyperelliptic curves more closely in Section \ref{hyperelliptic}. Theorem~C is proved in Section \ref{theoC}.

\section{Potential theory on Berkovich analytic curves}  \label{potential}

Let $X$ be a geometrically connected smooth projective curve over a local field
$(k,|\cdot|)$. Let $\overline{k}$ be an algebraic closure of
$k$, and let $\hat{\overline{k}}$ be the completion of $\overline{k}$.
The absolute value $|\cdot|$ extends in a unique way to $\hat{\overline{k}}$. One has
associated to $X$ a locally ringed space $\xx=(|\xx|,\mathcal{O}_\xx)$, where the
underlying topological space $|\xx|$ has the following properties: $|\xx|$ is
compact, metrisable, path-connected, and contains $X(\overline{k})$ with its topology
induced from $|\cdot|$ naturally as a dense subspace. If $k$ is archimedean, we
just take $X(\overline{k})$ itself, with its structure of complex analytic space; if
$k$ is non-archimedean we let $\xx$ be the Berkovich analytic space associated
to $X \otimes \hat{\overline{k}}$, as in \cite{be}.

Our purpose in this section is to put a
canonical probability measure $\mu_\xx$ on $|\xx|$, and to discuss a few results
from potential theory on $\xx$. Everything is standard for archimedean $k$; however for
non-archimedean $k$ the results seem to be less known. We base our discussion on the thesis of Thuillier \cite{th}, in particular Chapter~3. For more background on Berkovich spaces we refer to \cite{bak}. As an
application of the formalism we present a formula (see Proposition                   \ref{formulaintegral} below) for the integral
$\int_\xx \log |f| \, \mu_\xx$ where $f$ is an arbitrary non-zero rational function on $X\otimes \overline{k}$. This formula establishes a link with Zhang's theory of
local admissible pairing \cite{zh}.

We start by considering the natural exact sequence:
\[ 0 \longrightarrow \mathcal{H} \longrightarrow
\mathcal{A}^0 \, {\buildrel \ddc \over \longrightarrow} \,
\mathcal{A}^1  \longrightarrow 0 \]
of sheaves of $\rr$-vector spaces on $\xx$. Here $\mathcal{H}$ is the sheaf of
harmonic functions on $\xx$, $\mathcal{A}^0$ is the sheaf of smooth functions on
$\xx$, $\mathcal{A}^1$ is the sheaf of smooth forms on $\xx$, and $\ddc$ is the
Laplace operator. The sheaf $\mathcal{A}^0$ is in fact a sheaf of
$\rr$-algebras  and the sheaf $\mathcal{A}^1$ is naturally a sheaf of modules
over $\mathcal{A}^0$.

We let $\mathcal{A}^0(\xx)$ and $\mathcal{A}^1(\xx)$ be the spaces of global
sections of $\mathcal{A}^0$ and $\mathcal{A}^1$. Further we put
$\mathcal{D}^0(\xx) = \mathcal{A}^1(\xx)^*$ and $\mathcal{D}^1(\xx) =
\mathcal{A}^0(\xx)^*$ for their $\rr$-linear duals. We have a natural
$\rr$-linear integration map $\int_\xx \colon \mathcal{A}^1(\xx) \to \rr$ and a
natural $\rr$-bilinear pairing $\mathcal{A}^0(\xx) \times
\mathcal{A}^1(\xx) \to \rr$ given by
$(\varphi,\omega) \mapsto \int_\xx \varphi \, \omega$.
This pairing yields a natural commutative diagram:
\[ \xymatrix{ \mathcal{D}^0(\xx) \ar[r] & \mathcal{D}^1(\xx) \\
\mathcal{A}^0(\xx) \ar[u] \ar[r]^\ddc & \mathcal{A}^1(\xx)
\ar[u]    }  \]
where the upward arrows are injections, and
the map $\mathcal{D}^0(\xx)
\to \mathcal{D}^1(\xx)$ is the dual of the map
$\mathcal{A}^0(\xx)\, {\buildrel
\ddc \over \longrightarrow} \, \mathcal{A}^1(\xx)$,
also to be denoted by $dd^c$. The kernel of $\ddc
\colon \mathcal{D}^0(\xx) \to \mathcal{D}^1(\xx)$
is a one-dimensional $\rr$-vector space, to be identified with the set of constant
functions on $\xx$. The elements of $\mathcal{D}^\alpha(\xx)$ are called
$(\alpha,\alpha)$-currents; $(1,1)$-currents can be viewed as measures on
$|\xx|$. The unit element of $\mathcal{A}^0(\xx)$ gives, under the natural map
$\mathcal{A}^0(\xx) \to \mathcal{D}^1(\xx)^*$, a natural $\rr$-linear integration
map $\int_\xx \colon \mathcal{D}^1(\xx) \to \rr$, extending $\int_\xx$ on
$\mathcal{A}^1(\xx)$. Associated to each non-zero rational function $f$ on $X
\otimes \overline{k}$ we have a natural $(0,0)$-current $\log |f| \in
\mathcal{D}^0(\xx)$. For each closed point $p$ on $X\otimes \overline{k}$ we have a Dirac measure $\delta_p \in
\mathcal{D}^1(\xx)$, and by linear extension we obtain a measure $\delta_D \in
\mathcal{D}^1(\xx)$ for each divisor $D$ on $X\otimes \overline{k}$.
\begin{prop} \label{properties}
(i) (Poincar\'e-Lelong equation) Let $f$ be a non-zero
rational function on $X \otimes \overline{k}$. Then the equation: \[ \ddc \log |f| -
\delta_{\mathrm{div} f} = 0 \] holds in $\mathcal{D}^1(\xx)$. \\
(ii) (The Laplace operator is self-adjoint) We have: \[
\int_\xx \varphi \ddc \psi = \int_\xx \psi \ddc \varphi   \]
for all $\varphi, \psi \in \mathcal{D}^0(\xx)$. \\
(iii) (Existence
and uniqueness of Green's functions) Let $\mu \in \mathcal{A}^1(\xx)$ be a
smooth measure with $\int_\xx \mu=1$, and  let $p \in X(\overline{k})$. Then there
exists a unique current $g_{\mu,p} \in \mathcal{D}^0(\xx)$ such that: \[ \ddc g_{\mu,p} = \mu - \delta_p \, , \quad \int_\xx g_{\mu,p} \, \mu = 0 \, . \] The symmetry
relation $g_{\mu,p}(q) = g_{\mu,q}(p)$ holds for all $p \neq q \in X(\overline{k})$.
\end{prop}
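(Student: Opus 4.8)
The plan is to reduce everything to the two cases that are genuinely different — archimedean and non-archimedean — and, in each, to invoke the already-developed machinery: in the archimedean case this is classical complex potential theory on a compact Riemann surface, and in the non-archimedean case it is Thuillier's theory \cite{th} of the $\ddc$-operator and harmonic functions on Berkovich curves. Part (i) is the heart of the matter. In the complex case, $\log|f|$ is a classical distribution and the Poincar\'e–Lelong formula $\ddc \log|f| = \delta_{\operatorname{div} f}$ is standard (it follows from $\ddc \log|z| = \delta_0$ locally, via partitions of unity and the fact that $f$ is meromorphic). In the Berkovich case one argues locally: $f$ is, on a suitable affinoid neighbourhood, a product of a unit (whose $\log|\cdot|$ is harmonic, hence killed by $\ddc$) and finitely many ``coordinate'' functions whose associated currents Thuillier has computed, the upshot being exactly the prescribed combination of Dirac masses supported at the closed points of $X\otimes\overline{k}$ appearing in $\operatorname{div} f$. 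So (i) is obtained by quoting the local computation and gluing.

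For (ii), self-adjointness of $\ddc$ on currents, I would first establish it for \emph{smooth} $\varphi,\psi \in \mathcal{A}^0(\xx)$, where $\int_\xx \varphi\,\ddc\psi = \int_\xx \psi\,\ddc\varphi$ is an integration-by-parts (Stokes/Green) identity — in the complex case the usual one, in the Berkovich case Thuillier's Green formula on a finite subgraph exhausting $\xx$, using compactness and the absence of boundary. Then I would extend to general $(0,0)$-currents by a density/continuity argument: the relevant currents of interest (notably $\log|f|$ and Green's functions) are limits, in the appropriate topology, of smooth functions, and both sides of the identity are continuous for that topology. Alternatively, and perhaps more cleanly, one notes that the commutative square in the excerpt exhibits $\ddc \colon \mathrm{D}^0 \to \mathrm{D}^1$ as the transpose of $\ddc \colon \mathcal{A}^0 \to \mathcal{A}^1$ with respect to the pairing $\int_\xx \varphi\,\omega$; pairing a current $\varphi \in \mathrm{D}^0$ against $\ddc\psi$ for $\psi$ \emph{smooth} and using this transpose relation, together with the symmetry already known on the smooth level, gives (ii) whenever at least one of $\varphi,\psi$ is smooth, and then density handles the general case.

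For (iii), existence of $g_{\mu,p}$ amounts to solving $\ddc g = \mu - \delta_p$; since $\int_\xx(\mu - \delta_p) = 1 - 1 = 0$, the right-hand side lies in the image of $\ddc \colon \mathrm{D}^0(\xx) \to \mathrm{D}^1(\xx)$ — this surjectivity onto the ``degree-zero'' currents is part of the cohomological input (the exactness of the sheaf sequence, globalised using that $\xx$ has the cohomology of a point for $\mathcal{H}$, or directly Thuillier's solvability of the Laplace equation on Berkovich curves / the classical existence of Green's functions on Riemann surfaces). Uniqueness follows because the kernel of $\ddc$ on $\mathrm{D}^0(\xx)$ is just the constants, so two solutions differ by a constant, which the normalisation $\int_\xx g\,\mu = 0$ pins down. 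Finally, for the symmetry $g_{\mu,p}(q) = g_{\mu,q}(p)$: set $\varphi = g_{\mu,p}$, $\psi = g_{\mu,q}$ and apply (ii), getting $\int_\xx g_{\mu,p}\,\ddc g_{\mu,q} = \int_\xx g_{\mu,q}\,\ddc g_{\mu,p}$, i.e. $\int_\xx g_{\mu,p}(\mu - \delta_q) = \int_\xx g_{\mu,q}(\mu - \delta_p)$; the $\mu$-terms vanish by the normalisations, leaving $-g_{\mu,p}(q) = -g_{\mu,q}(p)$, as desired (the one technical point being that $g_{\mu,p}$ has a logarithmic singularity at $p$, so one must check it is genuinely integrable against $\delta_q$ for $q \neq p$ and that applying (ii) to this pair of non-smooth currents is legitimate — this is where the density argument of (ii) must be robust enough, e.g. by a symmetric regularisation of \emph{both} currents simultaneously near their respective poles).

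The main obstacle I anticipate is not any single step but the non-archimedean foundational input: making sure that Thuillier's currents, Green formula, and solvability statements are quoted in exactly the form needed, and that the density/continuity argument underpinning (ii) is valid for the specific non-smooth currents $\log|f|$ and $g_{\mu,p}$ that we care about — i.e. that these do lie in the closure of $\mathcal{A}^0(\xx)$ in $\mathrm{D}^0(\xx)$ for a topology in which the pairing with $\ddc(\text{smooth})$ is continuous. Once that is in place, (i) and (iii) are formal consequences of (ii) plus exactness.
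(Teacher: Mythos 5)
Your proposal is correct and is essentially the same as the paper's: the paper simply observes the archimedean case is classical and cites Thuillier's thesis (Propositions 3.3.15, 3.2.12, 3.3.13) for (i), (ii), (iii) in the non-archimedean case. You unpack what those citations contain (local Poincar\'e--Lelong, Green's formula plus density, solvability plus kernel-is-constants plus symmetry via self-adjointness), but the underlying strategy and inputs are the same.
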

\begin{proof} This is well-known for archimedean $k$. We find (i)--(iii)
respectively in \cite{th}, Proposition 3.3.15, Proposition 3.2.12 and
Proposition 3.3.13, for non-archimedean~$k$.
\end{proof}
Our next goal is to designate
a canonical probability measure $\mu_\xx \in \mathcal{A}^1(\xx)$. We assume from now on that the genus~$g$ of $X$ is positive. If
$k$ is archimedean we let $\mu_\xx$ be the Arakelov probability
measure on
$X(\overline{k})$. One way of giving $\mu_\xx$ is as follows: let $\iota \colon
X(\overline{k}) \to J(\overline{k})$ be an immersion of $X(\overline{k})$ into the
complex torus $J(\overline{k})$, where $J= \mathrm{Pic}^0 X$ is the jacobian
of $X$. Then $\mu_\xx = \frac{1}{g} \iota^* \nu$, where $\nu$ is the unique
translation-invariant
$(1,1)$-form representing the first Chern class of the line bundle
defining the canonical principal polarisation on $J(\overline{k})$.

Now suppose that $k$ is non-archimedean. Let $\mathcal{R}$ be the reduction
graph in the sense of Chinburg-Rumely \cite{cr} of $X$. This is a metrised graph, receiving a canonical surjective continuous specialisation map $\mathrm{sp} \colon |\xx| \to
\mathcal{R}$. In particular $\mathcal{R}$ is compact and path-connected. The map
$\mathrm{sp}$ has a canonical section $i \colon \mathcal{R} \to |\xx|$, and via $i$ the reduction graph $\mathcal{R}$ is identified with the minimal skeleton of $\xx$. The Laplace operator on $\xx$ extends in a natural way the Laplace operator on $\mathcal{R}$.

Now in \cite{zh}, Section~3 a canonical probability measure $\mu_\mathcal{R}$ is constructed on $\mathcal{R}$, called the admissible measure. We will not recall its definition; let us just say that it satisfies properties analogous
to the Arakelov measure in the archimedean setting. For example, it gives rise to
an adjunction formula. We define the canonical Arakelov measure $\mu_\xx$ on $|\xx|$ by putting $\mu_\xx = i_* \mu_\mathcal{R}$.

Let $g_{\mu_\xx,p}$ the Green's function based on $\mu_\xx$ from Proposition         \ref{properties}(iii). We then obtain a canonical symmetric pairing
$(,)_a$ on $X(\overline{k})$ by putting $(p,q)_a = g_{\mu_\xx,p}(q)$ for $p \neq q$.
This pairing coincides with the admissible pairing constructed in \cite{zh}, Section~4 using Green's functions on $\mathcal{R}$ with respect to $\mu_\mathcal{R}$ and the specialisation map. We refer to \cite{he} where this connection is made explicit.

We have the following proposition relating the integrals $\int_\xx \log |f| \, \mu_\xx$ to Zhang's admissible pairing.
\begin{prop} \label{formulaintegral}
Let $f$ be a non-zero rational function on $X \otimes \overline{k}$. Assume that a coordinate has been chosen on $\pp^1_k$. Then the formula:
\[ \int_\xx \log |f| \, \mu_\xx = \log |f|(r) + (\mathrm{div} f, r)_a \]
holds. Here $r$ is an arbitrary point in $X(\overline{k})$ outside
the support of $f$.
\end{prop}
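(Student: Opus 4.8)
The plan is to derive the identity from the Poincar\'e--Lelong equation and the defining properties of the Green's function $g_{\mu_\xx,r}$, glued together by the self-adjointness of $\ddc$. Write $\mathrm{div} f = \sum_i n_i p_i$; by hypothesis the point $r$ lies outside the support $\{p_i\}$ of $\mathrm{div} f$ (this is what ``outside the support of $f$'' should mean). Put $g = g_{\mu_\xx,r} \in \mathrm{D}^0(\xx)$, so that Proposition~\ref{properties}(iii) gives $\ddc g = \mu_\xx - \delta_r$ together with the normalisation $\int_\xx g\,\mu_\xx = 0$, while Proposition~\ref{properties}(i) gives $\ddc \log|f| = \delta_{\mathrm{div} f}$ in $\mathrm{D}^1(\xx)$.

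First I would apply the self-adjointness statement Proposition~\ref{properties}(ii) to the currents $\varphi = \log|f|$ and $\psi = g$, obtaining
\[ \int_\xx \log|f|\,\ddc g \;=\; \int_\xx g\,\ddc\log|f| \, . \]
Then I substitute the two Laplace equations above. The left-hand side becomes $\int_\xx \log|f|\,\mu_\xx - \log|f|(r)$, where $\log|f|(r)$ is legitimate because $\log|f|$ is harmonic near $r$. The right-hand side becomes $\sum_i n_i\,g(p_i)$, which is legitimate because $g$ is smooth away from $r$, hence near each $p_i$. Finally, invoking the symmetry $g_{\mu_\xx,r}(p_i) = g_{\mu_\xx,p_i}(r)$ from Proposition~\ref{properties}(iii) and the definition $(p_i,r)_a = g_{\mu_\xx,p_i}(r)$, the right-hand side equals $\sum_i n_i (p_i,r)_a = (\mathrm{div} f, r)_a$. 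Rearranging gives the desired formula; note that its independence of the choice of $r$ falls out of the argument for free.

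The step deserving the most care is verifying that all the pairings involved are genuinely defined in Thuillier's formalism, since $\log|f|$ and $g_{\mu_\xx,r}$ are currents rather than smooth functions. The essential observation is that their singular loci --- $\mathrm{supp}(\mathrm{div} f)$ for $\log|f|$ and $\{r\}$ for $g_{\mu_\xx,r}$ --- are disjoint, so that $\int_\xx \varphi\,\ddc\psi$, the evaluations $\log|f|(r)$ and $g(p_i)$, and the passage from $\int_\xx g\,\delta_{\mathrm{div} f}$ to $\sum_i n_i g(p_i)$ all make sense; in the archimedean case this is just the classical Green/Stokes identity for functions with logarithmic singularities along disjoint divisors, and the non-archimedean case is covered by the cited results of Thuillier. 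I would spell out these regularity remarks explicitly to make the application of Proposition~\ref{properties}(ii) unambiguous.
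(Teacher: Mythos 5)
Your proof is correct and matches the paper's argument: both integrate the Poincar\'e--Lelong equation $\ddc\log|f|=\delta_{\operatorname{div} f}$ against the Green's function $g_{\mu_\xx,r}$, use self-adjointness of $\ddc$ to swap the roles, and identify the two sides with $(\operatorname{div} f,r)_a$ and $-\log|f|(r)+\int_\xx\log|f|\,\mu_\xx$ respectively. The only cosmetic difference is that you invoke the symmetry relation $g_{\mu_\xx,r}(p_i)=g_{\mu_\xx,p_i}(r)$ explicitly, whereas the paper reads $(\operatorname{div} f,r)_a$ off directly from the definition of the admissible pairing.
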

\begin{proof} By Proposition \ref{properties}(i) we have:
\[ \ddc \log |f| = \delta_{\mathrm{div} f} \, . \]
By integrating against $g_{\mu_\xx,r}$ we obtain:
\[ \int_\xx g_{\mu_\xx,r} \ddc \log |f| = g_{\mu_\xx,r}(\mathrm{div}
f)=(\mathrm{div} f, r)_a \, . \]
On the other hand, by Proposition \ref{properties}(ii) and (iii) we have:
\begin{align*}
\int_\xx g_{\mu_\xx,r} \ddc \log |f| & = \int_\xx (\log |f|) \ddc g_{\mu_\xx,r}
\\
  & = \int_\xx (\log |f|) ( \mu_\xx - \delta_r) \\
  & = -\log|f|(r)+\int_\xx \log|f| \, \mu_\xx  \, .
\end{align*}
The proposition follows.
\end{proof}
In \cite{zh}, Theorem 4.6(iii) it is stated that with notations as in the above proposition $\log|f|(r)+(\mathrm{div} f,r)_a$ is constant outside the support of $f$. Using Thuillier's thesis we are thus able to interpret this constant as a
suitable integral over $\xx$.

\section{Connection with the N\'eron-Tate height on the jacobian} \label{global}

We now apply the global admissible intersection theory as developed in \cite{zh}, Section~5. Let $K$ be a number field and let $X$ be a smooth projective curve of positive genus $g$ over $K$.  For each place $v$ of $K$ we denote by $\overline{K}_v$ an algebraic closure of $K_v$. We endow each $K_v$ with a (standard) absolute value
$|\cdot|_v$ as follows. If $v$ is archimedean, we take the euclidean norm on
$K_v$. If $v$ is non-archimedean, we choose $|\cdot|_v$ such that
$|\pi|_v=\mathrm{e}$, where $\pi$ is a uniformiser of $K_v$.
Let $X_v$ be the
analytic space associated to $X \otimes \hat{\overline{K}}_v$,
and $\mu_v$ be the canonical measure on $X_v$, as in Section~\ref{potential}.

Let $M_K$ be the set of places of $K$. For each place $v \in M_K$, let $n_v$ be the (standard) local factor defined as follows: if $v$ is real, then $n_v=1$; if $v$ is complex, then $n_v=2$; if $v$ is non-archimedean, then $n_v$ is the $\log$ of the cardinality of the residue field at $v$. Note that we have a product formula
$\sum_{v \in M_K} n_v \log |\alpha|_v=0$ for all $\alpha$ in $K^*$.

Let $o \in X(K)$ be a point and $J = \mathrm{Pic}^0 \, X$ be the jacobian of $X$. We denote by $h \colon J(\overline{K}) \to \rr$ the N\'eron-Tate height associated to the canonical principal polarisation of $J$. Let $f$ be a non-zero rational function on $X$ and assume that a coordinate has been chosen on $\pp^1_K$. It follows from Proposition \ref{formulaintegral} that the real number $\int_{X_v} \log |f|_v \, \mu_v$ vanishes for almost all $v$.
\begin{prop} \label{NTheightformula}
Let $\mathrm{div} \, f = \sum_q m_q \cdot q$ be the divisor of the non-zero rational function $f$ on $X \otimes \overline{K}$. Then the formula:
\[ [K \colon \qq] \sum_q m_q \, h([q-o]) = g \sum_{v \in M_K} n_v \int_{X_v} \log |f|_v \, \mu_v \]
holds.
\end{prop}
\begin{proof} We follow the formalism and results of \cite{zh}, Section~5. First of all, for any point $q \in X(K)$ one has that $-2[K \colon \qq]h([q-o]) = (q-o,q-o)_a$, where now $(,)_a$ denotes global admissible pairing. This formula is essentially due to Hriljac and Faltings. By the adjunction formula (see \emph{op. cit.}) we next have $(q-o,q-o)_a = -(\omega+2\, o,q)_a-(\omega,o)_a$, where $\omega$ denotes the admissible relative dualising sheaf of $X$. We obtain:
\begin{align*} -2 \, [K\colon \qq]\sum_q m_q \, h([q-o]) & = -(\omega+2 \, o, \mathrm{div}\, f)_a \\
  & = -(\omega+2 \, o, \sum_{v \in M_K} n_v (\int_{X_v} \log|f|_v \, \mu_v ) \cdot X_v)_a \\
  & = -2g \sum_{v \in M_K} n_v \int_{X_v} \log |f|_v \, \mu_v \, .
\end{align*}
The proposition follows.
\end{proof}

\section{The local canonical height} \label{lambda}

Now assume that $X$ is equipped with a Galois covering map $x \colon X \to \pp^1_K$ of degree $N$, and that it has a point $o \in X(K)$ for which $x$ is totally ramified. Also assume that $x(o)=\infty$. Let again $v$ be a place of $K$. Take a point $p \in X(\overline{K}_v) \setminus \{o\}$ and put $f=x-x(p)$. This is then a well-defined rational function on $X \otimes \overline{K}_v$. We define $\lambda_v(p)$ to be the integral:
\[ \lambda_v(p) = \frac{1}{N} \int_{X_v} \log |x-x(p)|_v \, \mu_v \, . \]
This will be our basic object of study from now on. Let $p \in X(K) \setminus \{o\}$ and as in Section~\ref{global} let $h \colon J(\overline{K}) \to \rr$ be the N\'eron-Tate height associated to the canonical principal polarisation of the jacobian $J$ of $X$.
\begin{prop} \label{NT}
The real number $\lambda_v(p)$ vanishes for almost all $v$, and the formula:
\[ [K \colon \qq] h([p-o]) = g \sum_{v \in M_K} n_v \, \lambda_v(p) = \frac{g}{N} \sum_{v \in M_K} n_v \int_{X_v} \log |x-x(p)|_v \, \mu_v \]
holds.
\end{prop}
\begin{proof} This is a straightforward consequence of Proposition \ref{NTheightformula}.
Let $ -N o + \sum_q m_q q$ be the divisor of $f=x-x(p)$. The formula in Proposition \ref{NTheightformula} specializes to the following:
\[ [K \colon \qq] \sum_q m_q \, h([q-o]) = gN \sum_{v \in M_K} n_v \, \lambda_v(p) \, . \]
As all the $h([q-o])$ where $q$ runs through $x^{-1}(x(p))$ are equal (use the automorphism group of $x \colon X \to \pp^1_K$) we obtain the formula of the proposition.
\end{proof}
We find that the $\lambda_v(p)$ sum up to essentially the N\'eron-Tate height of the Abel-Jacobi image of $p$, with reference point $o$. It is instructive to compare this result with the main result of \cite{pst}, which writes the global canonical height associated to a morphism (dynamical system) $\varphi \colon \pp^1_K \to \pp^1_K$ as a sum of local canonical heights given by logarithmic integrals \`a la $\lambda_v$.

For $(X,o)$ an elliptic curve, the function $\lambda_v$ coincides with Tate's classical N\'eron function (see for instance \cite{se}, Chapter 6.5, the `first normalization').
\begin{prop} Assume that $(X,o)$ is an elliptic curve and that $x \colon X \to \pp^1_K$ is a hyperelliptic map. Let $v$ be a place of $K$. Then $\lambda_v$ is equal to the unique N\'eron function with respect to $o$ on $(X,o)$ normalised such that $\lambda_v(p) - \frac{1}{2} \log|x(p)|_v \to 0$ as $p \to o$.
\end{prop}
\begin{proof} Let $\tilde{\lambda}_v \colon X(\overline{K}_v) \setminus \{o\} \to \rr$ be the N\'eron function described in the proposition. It satisfies the `quasi-parallellogram law' (cf. \emph{op. cit.}):
\[
\tilde{\lambda}_v(p+q) + \tilde{\lambda}_v(p-q)=
2\, \tilde{\lambda}_v(p) + 2 \, \tilde{\lambda}_v(q)-\log|x(p)-x(q)|_v
\]
for all $p,q \in X(\overline{K}_v) \setminus \{o \}$ such that $p \neq \pm q$.
By fixing $p$ and integrating against $\mu_v(q)$ one finds, using the translation-invariance of $\mu_v$ and cancelling three terms:
\[ \tilde{\lambda}_v(p) = \frac{1}{2} \int_{X_v} \log|x-x(p)|_v \, \mu_v \, . \]
This shows that $\tilde{\lambda}_v = \lambda_v$.
\end{proof}
The following theorem analyzes the properties of the local functions $\lambda_v$ in more detail. Let $v$ again be any place of $K$.
\begin{thm} \label{equationslambda}
The function $\lambda_v \colon X(\overline{K}_v) - \{o\} \to \mathbb{R}$
extends naturally and uniquely as a $(0,0)$-current on $X_v$.
It satisfies the $dd^c$-equation:
\[ \ddc \lambda_v = \mu_v - \delta_o \, . \]
As a consequence we have:
\[ \lambda_v(p) = (p,o)_a + \int_{X_v} \lambda_v \, \mu_v \, , \]
where $(,)_a$ is the local admissible pairing on $X(\overline{K}_v)$.
Furthermore we have:
\[ \lambda_v(p) - \frac{1}{N} \log |x(p)|_v \to 0  \]
as $p \to o$ on $X(\overline{K}_v)$. In particular $\lambda_v$ defines a local
Weil function with respect to the divisor $o$ on $X$.
\end{thm}
\begin{proof}
Let $G$ be the automorphism group
of $x \colon X \to \pp^1_K$ over $\overline{K}$. Note that $\mathrm{div}(x-x(p))=-No + \sum_{\sigma \in G}
\sigma(p)$. From Proposition \ref{formulaintegral} we obtain that:
\begin{equation} \label{keyequation}
N\lambda_v(p) = \log|x(p)-x(r)|_v + \sum_{\sigma \in G} \left( \sigma(p)-o,r
\right)_a  \, ,
\end{equation}
where $r \in X(\overline{K}_v)$
is an arbitrary point, not in
the support of $x-x(p)$. Now consider equation (\ref{keyequation}) with $p$ as a variable and $r$ fixed.
Both $(\sigma(p)-o,r)_a$ and $\log|x(p)-x(r)|_v$ extend as $(0,0)$-currents over
$X_v$. Hence so does $\lambda_v$. The extension is unique, as
$X(\overline{K}_v)$ is
dense in $X_v$.  To prove the
first formula, note that as $(,)_a$ is canonical, it is invariant
under $G$. We can thus rewrite
(\ref{keyequation}) as:
\[  N \lambda_v(p) = \log|x(p)-x(r)|_v + \sum_{\sigma \in G}
\left(p-o,\sigma(r)\right)_a   \, . \]
Taking $dd^c$ we have, by Proposition \ref{properties}:
\[ N \ddc \lambda_v= \sum_{\sigma \in G} \left( \delta_{\sigma(r)} -  \delta_o
\right) + \sum_{\sigma \in G} \left( \mu_v - \delta_{\sigma(r)} \right)  \, . \]
It follows that $\ddc \lambda_v = \mu_v - \delta_o$ as required.
As $(p,o)_a = g_{\mu_v,o}(p)$ satisfies the same $\ddc$-equation, we obtain the second formula. To prove the last formula, let $p \to o$ in
(\ref{keyequation}). Then the sum $\sum_{\sigma \in G} \left( \sigma(p)-o,r
\right)_a$ converges to zero.
\end{proof}
\begin{thm} Let $(o,o)_a$ be the admissible self-intersection of the point $o$ on $X$. Then the formula:
\[ \sum_{v \in M_K} n_v \int_{X_v} \lambda_v \, \mu_v = \frac{1}{N} \sum_{v \in M_K} n_v \int_{X_v} \int_{X_v} \log |x(p)-x(q)|_v \mu_v(p) \mu_v(q) = -(o,o)_a \]
holds.
\end{thm}
\begin{proof} Choose a $p \in X(K) \setminus \{o\}$ arbitrarily. Let again $G$ be the automorphism group of $x$ over $\overline{K}$.
Note that $\mathcal{O}_X(\mathrm{div}(x-x(p)))$ = $\mathcal{O}_X(-No + \sum_{\sigma \in G} \sigma(p))$ is a trivial admissible line bundle at all places $v$ of $K$. It follows that the global pairing $(-No + \sum_{\sigma \in G} \sigma(p),r)_a$ is independent of the choice of $r$. We can choose $r=o$ and we derive from equation (\ref{keyequation}) in global admissible theory the relation:
\[ N \sum_{v \in M_K} n_v \, \lambda_v(p) = (-No + \sum_{\sigma \in G} \sigma(p),o)_a = N(p-o,o)_a \]
and hence:
\[ \sum_{v \in M_K} n_v \, \lambda_v(p) = (p-o,o)_a = (p,o)_a - (o,o)_a \, .\]
By Theorem \ref{equationslambda} we have on the other hand:
\[ (p,o)_a = \sum_{v \in M_K} n_v \left(\lambda_v(p) - \int_{X_v} \lambda_v \, \mu_v \right) \, . \]
The theorem follows.
\end{proof}
Note that if $g \geq 1$ and $(2g-2)o$ is a canonical divisor on $X$, we have:
\[ ((2g-2)o-\omega,(2g-2)o-\omega)_a=0  \, ,  \quad \textrm{i.e.,} \quad
-(o,o)_a = \frac{ (\omega,\omega)_a}{4g(g-1)} \, .
\]
In that case the formula in the Theorem becomes:
\[ \sum_{v \in M_K} n_v \int_{X_v} \lambda_v \, \mu_v = \frac{1}{N} \sum_{v \in M_K} n_v \int_{X_v} \int_{X_v} \log |x(p)-x(q)|_v \mu_v(p) \mu_v(q) = \frac{ (\omega,\omega)_a}{4g(g-1)} \, . \]
The condition that $(2g-2)o$ is a canonical divisor is fulfilled when $X$ is a hyperelliptic curve, and more generally, when the coordinate ring of $X \setminus \{o\}$ is generated by two elements. Such curves go by different names in the literature: plane model curves, $C_{ab}$-curves, Burchnall-Chaundy curves, $\ldots$

\section{Hyperelliptic curves} \label{hyperelliptic}

The purpose of this section is to study the local invariants $\int_{X_v} \lambda_v \, \mu_v$ in more detail for hyperelliptic maps. In particular we connect them with the local invariants $\chi(X_v)$ introduced in \cite{dj}. We start with a rather remarkable formula that computes the special value of $\lambda_v$ at a hyperelliptic ramification point.

Let $(X,o)$ be a hyperelliptic curve of genus $g \geq 2$ over $K$ given by
an equation $y^2=f(x)$, where $f(x) \in K[x]$ is monic and separable of
degree $m=2g+1$. Fix a place $v$ of $K$, as well as an algebraic closure
$\overline{K}_v$ of $K_v$. Keep the $v$-adic absolute values on $K_v$ and
$\overline{K}_v$ as defined in Section \ref{global}.
\begin{prop} \label{atalpha}
Let $w \in X(\overline{K}_v) \setminus \{o\}$ be a hyperelliptic ramification
point of $X$ and let $\alpha = x(w)$ in $\overline{K}_v$.
Then the formula:
\[ 2 \, \lambda_v(w) = \int_{X_v} \log|x-\alpha|_v \, \mu_v =
\frac{1}{2g} \log |f'(\alpha)|_v \]
holds.
\end{prop}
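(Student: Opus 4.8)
The strategy is to combine the key identity \eqref{keyequation} with the admissible adjunction and symmetry properties established in Section~\ref{potential}, specialised to $N=2$. First I would apply \eqref{keyequation} at the hyperelliptic ramification point $p$: since $\sigma(p)=p$, the sum $\sum_{i=0}^{N-1}(\sigma^i(p)-o,r)_a$ collapses to $2(p-o,r)_a$, so that
\[ 2\lambda_v(p) = \log|x(p)-x(r)|_v + 2(p-o,r)_a \]
for any $r\in X(\overline{K_v})$ off the support of $x-x(p)$. The right choice is to let $r$ tend to $p$ and extract the finite part. To see what that finite part is, note that near $p$ the function $x-\alpha$ has a zero of order $2$ (because $p$ is a ramification point of $x$), so that $\log|x-x(r)|_v \to -\infty$ like $2\log|t(r)|_v$ in a local coordinate $t$ at $p$; simultaneously $(p-o,r)_a = g_{\mu_v,o}(r) - g_{\mu_v,p}(r) = g_{\mu_v,o}(p) - g_{\mu_v,p}(r)$, and $g_{\mu_v,p}(r) \to +\infty$ like $-\log|t(r)|_v$ as $r\to p$ by the defining $\ddc$-equation for the Green's function. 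Thus the divergences cancel and one is left with a finite value that can be expressed through the admissible pairing and the local behaviour of $x-\alpha$ at $p$.

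More cleanly, I would avoid the limiting argument by interpreting $\int_{X_v}\log|x-\alpha|_v\,\mu_v$ directly via Proposition~\ref{formulaintegral}, applied to the rational function $f' (x)$ or, better, to a function whose divisor is supported away from the relevant points. Recall that on a hyperelliptic curve $y^2 = f(x)$ with $f$ separable of degree $2g+1$, if $\alpha$ is a root of $f$ and $p=(\alpha,0)$ the corresponding ramification point, then $x-\alpha = (\text{unit})\cdot t^2$ and $y = (\text{unit})\cdot t$ locally at $p$, with the units related to $f'(\alpha)$. Concretely the differential $\omega_{1,N-1} = -\d x/(2y)$ of the previous section has divisor $2(g-1)o$, and $\d(x-\alpha)/y^{2} = \d x / f(x)$ has a computable divisor; playing these off against each other expresses the ``leading coefficient'' at $p$ in terms of $f'(\alpha)$. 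The plan is: (a) write $2\lambda_v(p) = \int_{X_v}\log|x-\alpha|_v\,\mu_v$ from the definition; (b) use Proposition~\ref{formulaintegral} with $\operatorname{div}(x-\alpha) = 2p - 2o$ to get $\int_{X_v}\log|x-\alpha|_v\,\mu_v = \log|x(r)-\alpha|_v + 2(p-o,r)_a$ for generic $r$; (c) similarly apply Proposition~\ref{formulaintegral} to the function $y$, whose divisor is $\sum_{j}(\alpha_j,0) - (2g+1)o$ over the $2g+1$ roots $\alpha_j$ of $f$, and to the functions $x-\alpha_j$, and take a suitable $\qq$-linear combination so that the point $p$ and its pairing $(p-o,\cdot)_a$ get isolated; (d) use the adjunction formula $-(o,\omega)_a = (o,o)_a$ and the triviality of $2(g-1)o - \omega$ (both from Section~\ref{theoA}) to evaluate the remaining admissible pairings against $o$.

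Here is the cleanest route I would actually write down. Since $\operatorname{div}(y) = \sum_{j=1}^{2g+1}(\alpha_j,0) - (2g+1)o$ and $\operatorname{div}(x-\alpha_j) = 2(\alpha_j,0) - 2o$, we get $\operatorname{div}\bigl(y^2/\prod_j(x-\alpha_j)\bigr)=0$, i.e. $y^2 = c\prod_j(x-\alpha_j) = c\,f(x)$ identically, forcing $c=1$ — no new information — so instead I isolate one factor: $\operatorname{div}\bigl(y^2/(x-\alpha)\bigr) = \sum_{j\neq i}2(\alpha_j,0) - 2g\cdot o$ where $\alpha=\alpha_i$. Applying Proposition~\ref{formulaintegral} to this function and to $y$ and taking the combination that cancels the $(\alpha_j,0)$ for $j\neq i$, one expresses $\int_{X_v}\log|x-\alpha|_v\,\mu_v$ as $\frac1g\bigl[\log|f'(\alpha)|_v + (\text{pairing terms against }o)\bigr]$, because $f'(\alpha) = \prod_{j\neq i}(\alpha - \alpha_j)$, i.e. $\log|f'(\alpha)|_v = \sum_{j\neq i}\log|\alpha-\alpha_j|_v$ is exactly the value of $\sum_{j\neq i}\log|x - \alpha_j|_v$ evaluated at the point $p=(\alpha,0)$. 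The pairing terms against $o$ then vanish by the adjunction relation and the triviality of $2(g-1)o-\omega$, exactly as in the proof of Theorem~\ref{theheights}. The main obstacle, and the step requiring the most care, is bookkeeping the base points $r$ in the repeated applications of Proposition~\ref{formulaintegral} and checking that every divisor involved meets the support conditions — in particular making sure that $p$ itself, where $x-\alpha$ vanishes, never appears in the support of the auxiliary function against which we pair — and then verifying that the residual constant is precisely $\frac{1}{2g}\log|f'(\alpha)|_v$ with no spurious contribution from $(o,o)_a$.
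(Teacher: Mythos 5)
Your starting points are sound: applying Proposition~\ref{formulaintegral} to $x-\alpha$ with $\operatorname{div}(x-\alpha)=2p-2o$ gives $\int_{X_v}\log|x-\alpha|_v\,\mu_v=\log|x(r)-\alpha|_v+2(p-o,r)_a$, and the analogous formulas for the other $x-\alpha_j$ and for $y$ are exactly the kind of relations the proof must exploit. But the last step of your plan --- that a suitable $\qq$-linear combination of these, plus adjunction and the triviality of $2(g-1)o-\omega$, makes all the residual pairing terms vanish --- does not work, and this is where the real content of the statement lives.

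To see the gap concretely, write $\beta_j=(w_j,o)_a$, $\gamma_{jk}=(w_j,w_k)_a$ and $A_j=\int_{X_v}\log|x-\alpha_j|_v\,\mu_v$. Proposition~\ref{formulaintegral} applied to each $x-\alpha_j$ gives $A_j=\log|\alpha_k-\alpha_j|_v+2\gamma_{jk}-2\beta_k$ for every $k\neq j$; imposing $\gamma_{jk}=\gamma_{kj}$ yields only that $A_j-2\beta_j$ is a constant $B$ independent of $j$. The triviality of $2(g-1)o-\omega$ and the adjunction formula $(q,q)_a=-(q,\omega)_a$ pin down the diagonal entries $(w_j,w_j)_a$ and $(o,o)_a$ in terms of the $\beta_j$ and $(o,o)_a$, but impose no further constraint among the $\beta_j$; the function $y$ with $\operatorname{div}(y)=\sum_jw_j-(2g+1)o$ only reproduces the relation $\sum_jA_j=2\int\log|y|\mu_v$, which is automatic. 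Every combination you can form this way is therefore a tautology, and the key off-diagonal pairing $(w_i-o,w_k)_a$ (equivalently, the constant $B$) is left completely undetermined by your inputs. This is exactly what the paper imports from outside Section~\ref{potential}: for $g\geq 2$ it uses Theorem C of \cite{dj}, which evaluates $(w_i-w_j,w_k)_a$ explicitly as $-\frac12\log|\ell_{ijk}|_v$ in terms of the symmetric root, and substitutes $j=\infty$; for $g=1$ it uses the quasi-parallelogram law (the group law on the elliptic curve), which is likewise an input not available from potential theory and adjunction alone. Your plan contains no replacement for either. Note also a numerical warning sign: you assert the result comes out as $\tfrac1g[\log|f'(\alpha)|_v+(\text{pairing terms against }o)]$ with the pairing terms vanishing, which would give $\tfrac1g\log|f'(\alpha)|_v$ --- off by a factor of $2$ from the target $\tfrac1{2g}\log|f'(\alpha)|_v$. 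That the naive bookkeeping misses the factor is another indication that the ``vanishing by adjunction'' step is not actually a valid derivation.
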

\begin{proof} We use a result on the arithmetic of symmetric roots from
\cite{dj}. Let $\alpha_1,\ldots,\alpha_{2g+2}=\infty$ on $\mathbb{P}^1(\overline{K}_v)$ be
the branch points of $x$. The symmetric root of a triple
$(\alpha_i,\alpha_j,\alpha_k)$ of distinct branch points is then defined to be
an element:
\[ \ell_{ijk} = \frac{\alpha_i - \alpha_k}{\alpha_j - \alpha_k} \sqrt[2g]{ -
\frac{f'(\alpha_j)}{f'(\alpha_i)}} \]
of $\overline{K}_v^*$. The actual choice of $2g$-th root will be immaterial in
the discussion below. If $\alpha_j$ equals infinity, the formula is to be read as follows:
\begin{equation} \label{symmrootinfinity}
\ell_{i \infty k} = \left( \alpha_i - \alpha_k \right) \sqrt[2g]{-f'(\alpha_i)}^{-1}
\end{equation}
(recall that $f$ is monic). Now let $w_1,\ldots,w_{2g+2}$ on $X(\overline{K}_v)$ be the
hyperelliptic ramification points corresponding to $\alpha_1, \ldots,
\alpha_{2g+2}$. Theorem C of \cite{dj} then states that if $(w_i,w_j,w_k)$ is a
triple of distinct ramification points, the formula:
\begin{equation} \label{rootandpairing}
(w_i-w_j,w_k)_a = -\frac{1}{2} \log |\ell_{ijk}|_v
\end{equation}
holds. Here, as before $(,)_a$ denotes Zhang's local
admissible pairing on $X(\overline{K}_v)$. Applying Proposition \ref{formulaintegral}
to the rational function $x-\alpha_i$, with $\alpha_i$ a finite branch point, we
find:
\[ \int_{X_v} \log|x-\alpha_i|_v \, \mu_v =  \log|x(p)-\alpha_i|_v + 2 \, (w_i-o,p)_a \]
for any $p \neq o,w_i$. Taking $p=w_k$ and applying
(\ref{rootandpairing}) we find:
\begin{align*}
\int_{X_v} \log|x-\alpha_i|_v \, \mu_v & =  \log|\alpha_i-\alpha_k|_v + 2 \, (w_i-o,w_k)_a \\
  & = \log|\alpha_i - \alpha_k|_v - \log |\ell_{i \infty k}|_v \, .
\end{align*}
Hence by (\ref{symmrootinfinity}):
\[ 2 \, \lambda_v(w_i)=\int_{X_v} \log|x-\alpha_i|_v \, \mu_v=\frac{1}{2g}
\log|f'(\alpha_i)|_v \, . \]
The proposition is proven.
\end{proof}
The function $\lambda_v$ depends on the choice of monic equation $f$ for the
pointed curve $(X,o)$. Let $\Delta =
2^{4g}\Delta(f)$ where $\Delta(f)$ is the discriminant of $f$. We refer to
\cite{lo} for properties of $\Delta$. The discriminant $\Delta$
generalises the usual definition $\Delta = 2^4 \Delta(f)$ in the case where
$(X,o)$ is an elliptic curve.
We renormalise $\lambda_v$ by putting:
\[ \hat{\lambda}_v(p) = \lambda_v(p) - \frac{1}{4g(2g+1)} \log |\Delta|_v \, . \]
Then $\hat{\lambda}_v$ is independent of the choice of monic equation
$f$ for $(X,o)$, as one checks by replacing $x$ by $u^2x+t$ for $u \in K^*$, $t
\in K$. We obtain the familiar renormalisation:
\[ \hat{\lambda}_v = \lambda_v - \frac{1}{12} \log |\Delta|_v \]
in the case where $(X,o)$ is an elliptic curve (cf. \cite{se}, Chapter 6.5, the `second normalization').

Let $i$ be an index with $1 \leq i \leq 2g+2$. Then put:
\[ \chi(X_v) = -2g\left( \log|2|_v + \sum_{k \neq i} (w_i,w_k)_a \right) \, . \]
It is proved in \cite{dj}, Theorem B that $\chi(X_v)$ is independent of the choice
of~$i$, hence is an invariant of $X_v$. One can prove that $\chi(X_v) \geq 0$ for all $v$, and in fact $\chi(X_v)>0$ if $v$ is archimedean. We have $\chi(X_v)=0$ if $v$ is non-archimedean and $X$ has potentially good reduction at $v$. Let $(\omega,\omega)_a$ be the admissible self-intersection of the relative dualising sheaf of $X$. We have:
\begin{equation} \label{omegasquared}
(\omega,\omega)_a = \frac{2g-2}{2g+1} \sum_{v \in M_K} n_v \, \chi(X_v) \, ,
\end{equation}
where $v$ runs over the places of $K$. The following result says that
$\chi(X_v)$ is essentially equal to $\int_{X_v} \hat{\lambda}_v \, \mu_v$.
\begin{cor} \label{integrallambdahat} Let $v$ be a place of $K$.
Then the formula:
\[ \chi(X_v) = 2g(2g+1) \int_{X_v} \hat{\lambda}_v \, \mu_v  \]
holds.
\end{cor}
\begin{proof} Let $w_1,\ldots,w_{2g+1}$ be the ramification points of $X$ distinct from $o$. From Proposition \ref{atalpha} and the identity $\Delta(f)=\prod_{i=1}^{2g+1} f'(\alpha_i)$ we obtain the formula:
\[ \sum_{i=1}^{2g+1} \hat{\lambda}_v(w_i) = -\log|2|_v \, . \]
From Proposition \ref{equationslambda} we obtain that:
\[ \hat{\lambda}_v(p) = (p,o)_a + \int_{X_v} \hat{\lambda}_v \, \mu_v  \]
for each $p \in X(\overline{K}_v) \setminus \{o\}$. The corollary follows by combining these two relations.
\end{proof}
Note that the above result yields an alternative approach to Theorem B. Namely we derive:
\[ \sum_{v \in M_K} n_v \int_{X_v} \hat{\lambda}_v \, \mu_v = \frac{ (\omega,\omega)_a}{4g(g-1)} \]
from (\ref{omegasquared}) and Corollary \ref{integrallambdahat},
with the additional information that in this local decomposition, all contributions on the left hand side are non-negative, and are canonically associated to $X$. The equivalence of the above local decomposition with the formula from Theorem B is clear by the product formula.

It would be interesting to have explicit formulas for $\hat{\lambda}_v(p)$, for genus two curves say, \`a la the ones of Tate (see \cite{sil2}, Chapter VI) in the context of elliptic curves, given the type of the reduction graph $\mathcal{R}_v$ of $X$ at $v$, and the specialisation of $p$ on $\mathcal{R}_v$, if $v$ is non-archimedean. A natural case to start would be the case where $X$ is a Mumford curve at $v$. This occurs if the branch points of $X$ come in pairs of points closer to one another than to the other branch points, where the distance is measured by rational affinoid subsets of
the projective line \cite{br}.

\section{Proof of Theorem C} \label{theoC}

In this section we prove Theorem C. Let $K$ be a number field. We will make use of the following general diophantine approximation result due to Faltings (see \cite{fa}, Theorem II):
\begin{thm} \label{faltings}
Let $A$ be an abelian variety over $K$ and let $D$ be an ample
divisor on $A$. Let $v$ be a place of $K$ and let $\lambda_{D,v}$ be a N\'eron
function on $A(\overline{K}_v)$ with respect to $D$. Let $h$ be a height on $A(\overline{K})$
associated to an ample line bundle on $A$, and let $\kappa \in \rr_{>0}$. Then
there exist only finitely many $K$-rational points $z$ in $A-D$ such that
$\lambda_{D,v}(z) > \kappa \cdot h(z)$.
\end{thm}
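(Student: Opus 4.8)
This is one of the main theorems of \cite{fa}, and the only route I know to it is Vojta's method of auxiliary sections on a product of abelian varieties, in the form refined by Faltings; I sketch the architecture. I begin with reductions. Replacing $D$ by a positive integer multiple multiplies $\lambda_{D,v}$ by that integer, so after adjusting $\kappa$ neither the hypothesis nor the conclusion is affected; hence we may assume $D$ effective and very ample. For any two ample line bundles $L$, $M$ on $A$ one has $h_L \le c\, h_M + O(1)$ and symmetrically, so replacing $h$ by the height $h_D$ attached to $\mathcal{O}_A(D)$ only rescales $\kappa$ and adds a constant, which is harmless since the finitely many points below any fixed height bound may be discarded. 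We may enlarge $K$ to a finite extension, as finiteness over the larger field implies it over $K$. Finally, normalise the local N\'eron functions $\lambda_{D,w}$, with $w$ running over all places of $K$, so that $\sum_w n_w \lambda_{D,w} = [K\colon\qq]\, h_D$ and each is bounded below; this is N\'eron's theory of local heights, and it costs only a bounded shift in the ``closeness'' condition.

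Suppose now that the set $S$ of $z \in A(K) \setminus D$ with $\lambda_{D,v}(z) > \kappa\, h_D(z)$ is infinite. Since points of bounded height are finite in number, $S$ contains a sequence $z_1, z_2, \dots$ with $h_D(z_i) \to \infty$, and hence with $\lambda_{D,v}(z_i) \to \infty$. Fix a large integer $n$ (to be chosen last) and, passing to a subsequence, choose $z_1, \dots, z_n$ with $h_D(z_1)$ enormous and each ratio $h_D(z_{j+1})/h_D(z_j)$ enormous, the rates depending on $n$, $\dim A$ and $\kappa$. Put $P = (z_1, \dots, z_n) \in A^n(K)$. On $X = A^n$ work with $\mathcal{L} = \bigotimes_{j=1}^n \mathrm{pr}_j^*\, \mathcal{O}_A(d_j D)$, the weights $d_j$ being positive integers roughly proportional to $1/h_D(z_j)$ and of comparable total size, so that $\sum_j d_j h_D(z_j)$ is balanced across $j$. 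Riemann--Roch on the abelian variety $X$ gives $\dim H^0(X, \mathcal{L})$ of order $\prod_j d_j$, while vanishing at $P$ to weighted multi-index at most $t_0$ imposes fewer conditions when $t_0$ is small; Siegel's lemma over $\mathcal{O}_K$ (geometry of numbers) then produces a nonzero $K$-rational section $s \in H^0(X, \mathcal{L})$ whose coefficients have logarithmic height $O\!\left(\sum_j d_j h_D(z_j)\right)$ and with $\mathrm{ind}_P(s; \mathbf{d}) \ge t_0$.

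The hypothesis enters through the arithmetic of $s$. Saying that $\lambda_{D,v}(z_j) > \kappa\, h_D(z_j)$ means precisely that $z_j$ is very $v$-adically close to $D$, so that for a local equation $\theta$ of $D$ one has $\log \|\theta\|_v(z_j) < -\kappa\, h_D(z_j) + O(1)$; consequently $s$ and its low-order jets at $P$ are extremely small at $v$, with a gain of size $\kappa \sum_j d_j h_D(z_j)$. Feeding this into the product formula (the sum of local contributions to the height of the relevant jet of $s$ at $P$ vanishes) together with a slope inequality for the jet-evaluation map of $\mathcal{L}$ at $P$ forces the \emph{geometric} index $\mathrm{ind}_P(s; \mathbf{d})$ to be at least $\epsilon_0 n$, where $\epsilon_0 > 0$ depends only on $\kappa$ and $\dim A$, once $n$ is large. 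The contradiction comes from Faltings's Product Theorem: since the $d_j$ decrease rapidly and $\mathrm{ind}_P(s; \mathbf{d}) \ge \epsilon_0 n$, the locus of high-index vanishing of $s$ contains a proper subvariety $Z \subsetneq A^n$ through $P$ of the form $Z = \prod_{j=1}^n Z_j$ with $Z_j \subseteq A$, cut out by equations of controlled degree. But the $z_j$ are distinct $K$-points of enormous, rapidly growing height, and descending induction on $n$ (applying the construction to the product of fewer factors) shows no such product locus can contain $P$, since the factors would have to carry the $z_j$ on low-degree subvarieties incompatibly with their height growth. Choosing, in order, $t_0$, the $d_j$, the height-growth rates, and finally $n$ large relative to $\kappa$ and $\dim A$, the bounds of this and the previous paragraph collide; hence $S$ is not Zariski dense. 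Noetherian induction on $\dim A$ then upgrades non-density to finiteness: for any translated abelian subvariety $B + a$ appearing in the Zariski closure of $S$ one has $B + a \not\subseteq D$, so $D \cap (B+a)$ is again an ample divisor and the same statement applies on $B + a$, the base case $\dim = 0$ being trivial.

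The step I expect to be by far the hardest to reconstruct is Faltings's Product Theorem and the multidimensional Dyson lemma (Esnault--Viehweg) and zero-estimate machinery underlying it, together with the arithmetic bookkeeping that converts the approximation hypothesis into the lower bound $\mathrm{ind}_P(s;\mathbf{d}) \ge \epsilon_0 n$; everything else --- properties of heights and N\'eron local heights, Riemann--Roch on abelian varieties, Siegel's lemma --- is standard. For the purposes of the present paper one simply invokes \cite{fa}, Theorem~II; the sketch above indicates what stands behind that citation.
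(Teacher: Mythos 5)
The paper offers no proof of this statement at all: it is quoted verbatim as Faltings's Theorem~II from \cite{fa} and used as a black box. Your proposal correctly identifies this and ends by invoking exactly that citation, so you and the paper are in agreement on what needs to be done here; the sketch of Vojta's method, the index lower bound from the $v$-adic approximation hypothesis, Faltings's Product Theorem, and the Noetherian induction from non-density to finiteness is a fair outline of what stands behind \cite{fa}, but none of it is reproduced or needed in the paper itself.
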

Again let $X$ be a smooth projective curve of positive genus $g$ defined over $K$, let $x \colon X \to \pp^1_K$ be a Galois covering map and assume that $o$ is a totally ramified point for $x$, such that $x(o)=\infty$. Let $J=\mathrm{Pic}^0 X$ be the jacobian of $X$, and let
$\iota \colon X \to J$ be the Abel-Jacobi embedding given by $p \mapsto [p-o]$.
Then we have a natural theta divisor $\Theta$ on $J$ represented by the classes
$[q_1+\cdots+q_{g-1} - (g-1)o]$ for $q_1,\ldots,q_{g-1}$ running through $X$. We note that $\Theta$ is invariant under the automorphism group of $x \colon X \to \pp^1_K$ acting on $J$ in its natural fashion.

For $n \geq g$ any integer we define $X_n$ to be the divisor
$\iota^* [n]^* \Theta$ of $X$. This is an effective $K$-divisor on $X$ of degree
$gn^2$, as can be seen for example by noting that $X_n$ coincides with the
divisor of Weierstrass points of the line bundle $\mathcal{O}_X(o)^{\otimes
n+g-1}$ as considered in \cite{ne}. The points in the support of $X_n$ are called the $n$-th order Weierstrass points of $(X,o)$. By our remark above, $X_n$ is invariant under the automorphism group of $x \colon X \to \pp^1_K$. We note that $X_g = W + go$, where $W=\sum_{p \in X} w(p) \cdot p$ is the `classical' divisor of Weierstrass points of $X$ determined by the Weierstrass weights $w(p)$ based on the gap sequence at $p$.

For $p \in X(\overline{K})$ we define: 
\[ T(p) = \{ n \in \zz_{\geq g} \,|\, p \notin X_n \} = \{ n \in \zz_{\geq g} \,|\, n[p-o] \notin \Theta \} \, . \]
For example, if $g=1$ then $\Theta = \{o\}$ and $T(p)$ is the set of positive integers in the complement of a subgroup of $\zz$.
\begin{lem} \label{infiniteset}
Let $p \in X(\overline{K})$. If $T(p)$ is not empty, then $T(p)$ contains
infinitely many elements.
\end{lem}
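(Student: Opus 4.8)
The plan is to show that the condition $n[p-o] \notin \Theta$, when it holds for one $n$, is inherited by infinitely many others, exploiting the homogeneity of the canonical height and the fact that $\Theta$ ``sees'' heights. Since $\Theta$ is ample and symmetric, the N\'eron--Tate height $h_J$ is, up to a bounded error, the height attached to $\mathcal{O}_J(\Theta)$; more precisely there is a constant $C$ with $|h_{\mathcal{O}_J(\Theta)}(z) - h_J(z)| \le C$ for all $z \in J(\overline{K})$, where $h_{\mathcal{O}_J(\Theta)}$ is a Weil height for $\Theta$. Also $h_J(n z) = n^2 h_J(z)$ for all integers $n$.

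First I would dispose of the torsion case: if $[p-o]$ is a torsion point of $J$, then $\{n[p-o] : n \ge g\}$ is a finite set, so if some $n_0 \ge g$ has $n_0[p-o] \notin \Theta$ then infinitely many $n$ (namely all $n$ in the same residue class as $n_0$ modulo the order of $[p-o]$, intersected with $\zz_{\ge g}$) also satisfy $n[p-o] = n_0[p-o] \notin \Theta$, and we are done. So assume $z := [p-o]$ is non-torsion, hence $h_J(z) > 0$ by the standard properties of the N\'eron--Tate height on abelian varieties over number fields. Then $h_J(nz) = n^2 h_J(z) \to \infty$, so $h_{\mathcal{O}_J(\Theta)}(nz) \to \infty$ as well. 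On the other hand, for any point $w$ lying \emph{on} the support of $\Theta$ (with $w$ not in the base locus, which is irrelevant since $\Theta$ is a divisor and we can represent $h_{\mathcal{O}_J(\Theta)}$ by a function bounded below on $J - \Theta$ and $\to +\infty$ near $\Theta$ only through its singularities $\ldots$) — more cleanly: pick an effective divisor $\Theta'$ linearly equivalent to $\Theta$ and not containing any $nz$ for $n$ in some cofinite set (possible since the $nz$ are infinitely many distinct points and $\Theta'$ is a fixed divisor, so only finitely many $nz$ can lie on $\Theta'$), then $h_{\mathcal{O}_J(\Theta)}(nz)$ is computed as a sum of local Weil functions for $\Theta'$, each bounded on the complement. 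This already shows $nz \notin \Theta'$ for all large $n$; but I want $nz \notin \Theta$ itself. Here I would instead use the explicit N\'eron function: fix a N\'eron function $\lambda_{\Theta,v}$ for $\Theta$ at some place $v$; it satisfies $\lambda_{\Theta,v}(z) \to +\infty$ precisely as $z \to \Theta$, and the global Weil height $h_{\mathcal{O}_J(\Theta)}(z) = \frac{1}{[K:\qq]}\sum_v n_v \lambda_{\Theta,v}(z)$ for $z \notin \Theta$. The point is that for $z$ on $\Theta$ this sum ``is $+\infty$''. Since $T(p)$ nonempty gives one $n$ with $nz \notin \Theta$, I instead argue by contradiction: suppose $n z \in \Theta$ for all but finitely many $n \ge g$; then in particular $nz \in \Theta$ for arbitrarily large $n$.

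Now I would derive the contradiction via Faltings' Theorem~\ref{faltings}. Apply it with $A = J$, $D = \Theta$ (ample), the place $v$, the N\'eron function $\lambda_{\Theta,v}$, the height $h = h_J$ (associated to the ample $\mathcal{O}_J(\Theta)$, so legitimate), and some $\kappa > 0$ to be chosen. The theorem says there are only finitely many $K$-rational $z' \in J - \Theta$ with $\lambda_{\Theta,v}(z') > \kappa \, h_J(z')$. The idea is: the points $nz$ that \emph{do} lie off $\Theta$ form the set $T(p)$ up to indexing, and I want to show that if $T(p)$ were finite, these finitely many points, together with the density of the multiples approaching $\Theta$, force a violation. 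Actually the cleanest route: if $T(p)$ is finite, choose $N_0$ with $nz \in \Theta$ for all $n \ge N_0$, $n \ne$ (the finitely many exceptions). Then consider instead, for each large $n \notin T(p)$, a \emph{perturbation}: the point $nz$ is on $\Theta$, but I claim one can find $n' \in T(p)$ arbitrarily large by the following approximation argument — this is where I expect the main obstacle. The natural mechanism (and the one matching the Introduction's description that Theorem~B's proof ``relates the division polynomial of $H_n$ to $\lambda_v$ and $\lambda_{\Theta,v}$'') is that the multiples $nz$ cannot \emph{all} land on the fixed divisor $\Theta$: the orbit $\{nz\}$ is Zariski-dense in the smallest algebraic subgroup containing $z$ (for $z$ non-torsion this is a positive-dimensional subtorus), which is not contained in $\Theta$ since $\Theta$ is a divisor, hence contains no coset of a positive-dimensional subgroup of $J$ when $J$ is simple, or more generally because $\Theta$ is ample so contains no translate of an abelian subvariety of positive dimension. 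Therefore $nz \notin \Theta$ for infinitely many $n$, and intersecting with $\zz_{\ge g}$ gives the claim; Faltings is then not even needed for this lemma (it is used later). So I would restructure: \emph{(a)} torsion case by finiteness of the orbit; \emph{(b)} non-torsion case: let $B \subset J$ be the Zariski closure of $\{nz : n \in \zz\}$; it is a finite union of cosets of a connected algebraic subgroup $B_0$ of positive dimension (since $z$ is non-torsion); as $\Theta$ is ample, $\Theta \cap B \ne B$ unless $\dim B_0 = 0$, contradiction; hence $\{n : nz \in \Theta\}$ is not cofinite in $\zz$, so $\{n \ge g : nz \notin \Theta\} = T(p)$ is infinite provided it is nonempty — but actually it is automatically infinite once $B \not\subset \Theta$, matching the statement. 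The main obstacle is the structural input ``an ample divisor contains no translate of a positive-dimensional abelian subvariety'', and the identification of the Zariski closure of $\zz z$ as a finite union of cosets of a positive-dimensional subgroup; both are standard (the latter is a theorem on Zariski closures of cyclic subgroups of abelian varieties), and I would cite them rather than reprove them.
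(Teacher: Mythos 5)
Your final, restructured argument is in the right territory, but it leans on a false general fact: it is \emph{not} true that an ample divisor on an abelian variety contains no translate of a positive-dimensional abelian subvariety. For instance, on $A = E_1 \times E_2$ the divisor $D = E_1 \times \{0\} + \{0\} \times E_2$ is ample (Nakai--Moishezon: $D^2 = 2 > 0$ and $D\cdot C > 0$ for every curve $C$), yet it contains the abelian subvariety $E_1 \times \{0\}$; and this $D$ is exactly the theta divisor of the product polarisation, so the claim fails even for theta divisors of principally polarised abelian varieties, including jacobians of curves whose jacobian decomposes up to isogeny. So the step ``as $\Theta$ is ample, $\Theta \cap B \neq B$ unless $\dim B_0 = 0$'' is not justified. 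Fortunately you don't need ampleness here at all: $B \not\subset \Theta$ is an immediate consequence of the hypothesis that $T(p)$ is nonempty (some $n_0 z$ lies in $B \smallsetminus \Theta$). Once you have $B \not\subset \Theta$, your observation that $B\smallsetminus\Theta$ is a nonempty open subset of the positive-dimensional group variety $B$, hence infinite, hence meets the dense set $\{nz\}$ infinitely often, does finish the proof.

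With that repair, your argument is essentially the paper's, modulo a small difference in how the contradiction is extracted in the non-torsion case. The paper shows that if only finitely many $nz$ lie off $\Theta$, those points are \emph{isolated} in $Z = \overline{\{nz\}}$; since $Z$ is a group variety (this is the paper's Lemma~\ref{isagroup}, the same input you cite), all its components share the dimension of $Z^0$, so an isolated point forces $\dim Z = 0$, contradicting non-torsion. Your density argument and the paper's equidimensionality argument are two faces of the same fact. One other remark: the extended digression through N\'eron--Tate heights and Faltings' theorem in the first half of your proposal is, as you yourself eventually note, not needed for this lemma --- Faltings' theorem only enters later, in the proof of Theorem~\ref{main}, to control $\lambda_{\Theta,v}(n[p-o])/n^2$.
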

\begin{proof} For $p$ such that $[p-o]$ is torsion in $J$ the statement is
immediate: assume $n_0[p-o] \notin \Theta$, then if $k$ is the order of $[p-o]$
we can take those $n \geq g$ such that $n \equiv n_0 \bmod k$. Assume therefore
that $[p-o]$ is not torsion in $J$. We prove that infinitely many points of the
form $n[p-o]$ where $n \in \zz_{\geq 0}$ are not in $\Theta$. Let $Z^+$ be the Zariski closure of the set $\zz_{\geq 0} \cdot [p-o]$ in $J$, and
let $Z$ be the Zariski closure in $J$ of the
subgroup $ \zz \cdot [p-o]$ of $J$. Then $Z$ is a closed algebraic
subgroup of $J$, by Lemma \ref{isagroup} below, and using the involution $x \mapsto -x$ on $J$ one sees that actually $Z^+=Z$. Suppose that only finitely many of the $n[p-o]$ with $n \in \zz_{\geq 0}$ are outside
$\Theta$. Then $Z=Z^+$ is the union of a finite positive number of isolated points and a closed subset of $\Theta$. It follows that $Z$ has dimension zero, contradicting
the assumption that $[p-o]$ is not torsion.
\end{proof}
\begin{lem} \label{isagroup} Let $G$ be an algebraic group variety over a
field $k$ and let $H$ be a subgroup of $G$. Then the
Zariski closure of $H$ in $G$ is an algebraic subgroup of $G$.
\end{lem}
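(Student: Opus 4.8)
The statement is a standard fact about algebraic groups (it can be found, for instance, in Borel's book \emph{Linear Algebraic Groups}), and the plan is to reproduce the classical argument. First I would reduce to the case that $k$ is algebraically closed. Writing $\overline{H}$ for the Zariski closure of $H$ in $G$, endowed with its reduced structure, the formation of $\overline{H}$ commutes with the base change from $k$ to an algebraic closure $\overline{k}$: indeed $H$ consists of $k$-rational points, hence is stable under $\mathrm{Gal}(\overline{k}/k)$, so the Zariski closure of $H$ inside $G_{\overline{k}}$ is Galois-stable and descends to $G$, and over the fields relevant here reducedness is preserved under this base change. So I may assume $k=\overline{k}$, in which case the closed points of $G$ are exactly its $k$-points and $H$ is a dense subset of $\overline{H}$.

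The heart of the argument is that translations and inversion are \emph{homeomorphisms} of the underlying space of $G$. For $h\in H$ the left translation $\ell_h\colon G\to G$ is an automorphism of the variety $G$, hence a homeomorphism, and it carries $H$ onto $H$; therefore $\ell_h(\overline{H})=\overline{\ell_h(H)}=\overline{H}$. Likewise inversion $\iota\colon G\to G$ is a homeomorphism with $\iota(H)=H$, so $\iota(\overline{H})=\overline{H}$, and since $\rho_h=\iota\circ\ell_{h^{-1}}\circ\iota$ each right translation $\rho_h$ by $h\in H$ also preserves $\overline{H}$. At this point $\overline{H}$ contains the identity $e$, is stable under inversion, and satisfies $H\cdot\overline{H}=\overline{H}=\overline{H}\cdot H$.

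It then remains to upgrade stability under translation by $H$ to stability under the full multiplication $m\colon G\times_k G\to G$. I would consider the closed subscheme $C:=(m|_{\overline{H}\times_k\overline{H}})^{-1}(\overline{H})$ of $\overline{H}\times_k\overline{H}$. By the previous step $C$ contains $\{h\}\times\overline{H}$ for every $h\in H$; since $H$ is dense in $\overline{H}$ and the first projection $\overline{H}\times_k\overline{H}\to\overline{H}$ is open, the union of these slices is dense, so $C$ has full support. Because $\overline{H}$ is reduced over the algebraically closed field $k$ the product $\overline{H}\times_k\overline{H}$ is reduced, and a closed subscheme of full support of a reduced scheme is the whole scheme; hence $C=\overline{H}\times_k\overline{H}$, i.e.\ $m$ maps $\overline{H}\times\overline{H}$ into $\overline{H}$. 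Thus $\overline{H}$ is a closed subvariety of $G$ stable under $m$ and $\iota$ and containing $e$, that is, an algebraic subgroup.

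I expect the only real friction to be bookkeeping rather than ideas: making the reduction to $k=\overline{k}$ genuinely clean (Galois descent for the reduced closure), and passing from the set-theoretic density statement to the scheme-theoretic equality $C=\overline{H}\times_k\overline{H}$, where geometric reducedness of $\overline{H}$ is exactly the input that rules out nilpotents. All of this can, if one prefers, simply be bypassed by citing the corresponding proposition in Borel's book.
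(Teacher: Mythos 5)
Your proof is correct, but it takes a genuinely different route from the paper's. The paper uses the \emph{stabiliser trick}: writing $Z=\overline{H}$, each left translate $t_hZ$ for $h\in H$ is closed and contains $H$, hence contains $Z$, and since $t_h$ is invertible this forces $t_hZ=Z$; thus $H\subseteq\mathrm{Stab}(Z)$, which is a closed algebraic subgroup of $G$, so $Z=\overline{H}\subseteq\mathrm{Stab}(Z)$, and from $Z\subseteq\mathrm{Stab}(Z)$ together with $e\in Z$ one reads off directly that $Z$ is closed under multiplication and inversion. You instead give the classical Borel--Springer argument: reduce to $k=\overline{k}$, establish translation- and inversion-stability of $\overline{H}$ as above, and then upgrade set-theoretic density ($H\times\overline{H}$ dense in $\overline{H}\times\overline{H}$) plus reducedness of $\overline{H}\times\overline{H}$ to the scheme-theoretic statement $m(\overline{H}\times\overline{H})\subseteq\overline{H}$. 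The trade-off is clear: the stabiliser argument is shorter and avoids explicit density bookkeeping, but it leans on the (also nontrivial, though standard) fact that $\mathrm{Stab}(Z)$ is a closed algebraic subgroup; your version spells out the density and reducedness input explicitly, at the cost of more bookkeeping and a base-change reduction. Both are valid and both ultimately black-box a comparable amount of foundational material, so the choice is mostly a matter of taste.
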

\begin{proof} Let $Z$ be the Zariski closure of $H$ in $G$ and for every $h$ in
$H$ denote by $t_hZ$ the left translate of $Z$ under $h$ in $G$.
As $t_hZ$ is closed in $G$ and contains $H$ we find that
$t_hZ$ contains $Z$ and in fact
$t_hZ = Z$. This implies that $H$ is contained in the stabiliser
$\mathrm{Stab}(Z)$ of $Z$, which is a closed algebraic subgroup of $G$. We
conclude that $Z$ is contained in $\mathrm{Stab}(Z)$ and hence
$Z$ is itself an algebraic subgroup of $G$.
\end{proof}
Note that $T(p)$ can be empty for $p \neq o$: for example if $x \colon X \to
\mathbb{P}^1_K$ is totally ramified at $p$ as well and $N \leq g$. We refer to \cite{sv} for a study of the sets $T(p)$ in a more general setting.

We have the following theorem.
\begin{thm} \label{main}
Let $p \in X(K) \setminus \{o\}$ be a rational point.
Assume that $p$ is not in $X_n$ for at least one $n \geq g$. Let $v$ be a place of~$K$. Then one has:
\[ \frac{1}{gn^2} \sum_{q \in X_n \atop x(q) \neq \infty} \log|x(p)-x(q)|_v
\longrightarrow \int_{X_v} \log|x - x(p)|_v \, \mu_v \]
as $n \to \infty$ over the infinite set $T(p)$. In the sum on the left hand side, points are counted with multiplicity.
\end{thm}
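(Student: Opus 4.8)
The plan is to write the left-hand side in terms of a ``division polynomial'' for $H_n$ and a N\'eron function on $J$, thereby reducing the statement to the single limit $\tfrac{1}{gn^2}\lambda_{\Theta,v}(n[p-o])\to 0$, which is then obtained from Faltings' Theorem~\ref{faltings}. Since $\Theta$ is symmetric we have $[n]^*\mathcal{O}_J(\Theta)\cong\mathcal{O}_J(\Theta)^{\otimes n^2}$, and, using that $2(g-1)o$ is a canonical divisor, one checks $\iota^*\mathcal{O}_J(\Theta)\cong\mathcal{O}_X(g\cdot o)$; hence $H_n=\iota^*[n]^*\Theta$ is linearly equivalent to $gn^2\cdot o$, and we fix a nonzero rational function $\psi_n$ on $X$ with $\mathrm{div}\,\psi_n=H_n-gn^2\cdot o$. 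Let $\lambda_{\Theta,v}$ be a N\'eron function for $\Theta$ on $J(\overline{K_v})$, so that $\ddc\lambda_{\Theta,v}=\nu_\Theta-\delta_\Theta$ with $\nu_\Theta$ the translation-invariant form representing $c_1(\mathcal{O}_J(\Theta))$, and recall that $\iota^*\nu_\Theta=g\,\mu_v$. Pulling this back along $z\mapsto n[z-o]$, using $[n]^*\nu_\Theta=n^2\nu_\Theta$, Proposition~\ref{equationslambda}, and the Poincar\'e--Lelong equation of Proposition~\ref{properties} for $\psi_n$, a short computation shows that $\log|\psi_n(z)|_v-\bigl(gn^2\lambda_v(z)-\lambda_{\Theta,v}(n[z-o])\bigr)$ lies in $\ker\ddc$, hence equals a constant $c_{n,v}$. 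This is the division polynomial formula:
\[
\log|\psi_n(z)|_v \;=\; gn^2\,\lambda_v(z)\;-\;\lambda_{\Theta,v}(n[z-o])\;+\;c_{n,v}\qquad\bigl(z\in X(\overline{K_v}),\ z\notin H_n\cup\{o\}\bigr).
\]

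I would then expand each summand $\log|x(p)-x(q)|_v$. For $q\in H_n$ with $x(q)\neq\infty$, i.e. $q\notin\{o,\sigma^0(p),\dots,\sigma^{N-1}(p)\}$ --- this uses $p\notin H_n$, which holds because $n\in T(p)$ --- equation (\ref{keyequation}) gives $\log|x(p)-x(q)|_v=N\lambda_v(p)-\sum_{i=0}^{N-1}(\sigma^i(p)-o,q)_a$. Summing over such $q$ and using that $H_n$ and $\mu_v$ are $\sigma$-invariant (with $(\,,)_a$ being $\mathrm{Aut}$-invariant), the double sum collapses and one is left with $N(gn^2-\mu_n)\lambda_v(p)-N\!\sum_{q}(p-o,q)_a$, where $\mu_n=\mathrm{ord}_oH_n$. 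Applying Proposition~\ref{formulaintegral} to $\psi_n$ --- at $z=p$, and in a limit $z\to o$ to take care of the prime $o$ over $\infty$ --- evaluates $\sum_q(p-o,q)_a$ in terms of $\log|\psi_n(p)|_v$ and $v$-adic constants; inserting the division polynomial formula, the $\lambda_v(p)$-terms cancel and one obtains
\[
\frac{1}{gn^2}\sum_{q\in H_n,\ x(q)\neq\infty}\log|x(p)-x(q)|_v\;=\;N\lambda_v(p)\;-\;\frac{N}{gn^2}\,\lambda_{\Theta,v}(n[p-o])\;+\;\varepsilon_n ,
\]
where $\varepsilon_n=\tfrac{N}{gn^2}A_n$, $A_n$ being the value at $o$ of the N\'eron function $z\mapsto\lambda_{\Theta,v}(n[z-o])-\mu_n\lambda_v(z)$ attached to the effective divisor $H_n-\mu_n\cdot o$ (which is finite at $o$, since $o$ is not in its support); in particular $\varepsilon_n$ is independent of $p$. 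As $N\lambda_v(p)=\int_{X_v}\log|x-x(p)|_v\,\mu_v$, the theorem reduces to proving $\tfrac{1}{gn^2}\lambda_{\Theta,v}(n[p-o])\to 0$ and $\varepsilon_n\to 0$ as $n\to\infty$ over the infinite set $T(p)$ (Lemma~\ref{infiniteset}).

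The first limit is where Faltings enters: apply Theorem~\ref{faltings} to $A=J$, the ample divisor $D=\Theta$, the N\'eron function $\lambda_{\Theta,v}$, and $h=h_J$. If $[p-o]$ is not a torsion point then $h_J([p-o])>0$, and for each $\kappa>0$ all but finitely many $n\in T(p)$ satisfy $\lambda_{\Theta,v}(n[p-o])\le\kappa\,h_J(n[p-o])=\kappa n^2h_J([p-o])$; since $\lambda_{\Theta,v}$ is bounded below and $\kappa>0$ is arbitrary this forces $\tfrac{1}{gn^2}\lambda_{\Theta,v}(n[p-o])\to 0$. If $[p-o]$ is torsion, then $n[p-o]$ runs through a finite subset of $J(\overline K)\setminus\Theta$ as $n$ runs over $T(p)$, so $\lambda_{\Theta,v}(n[p-o])$ is bounded and the limit is again $0$.

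For the second limit one first notes $\mu_n=O(n)$: indeed $\mu_n$ is the Weierstrass weight of $o$ for $\mathcal{O}_X(o)^{\otimes n+g-1}$, hence at most $g$ times $h^0(\mathcal{O}_X((n+g-1)o))=n$, by the Weierstrass semigroup of $X$ at $o$; and one then shows $A_n=o(gn^2)$, either by a Faltings-type estimate at the height-zero point $o$, or equivalently by the boundedness of the average height $\tfrac{1}{gn^2}\sum_{q\in H_n}h_J([q-o])$ of the $n$-division points --- a fact that can itself be read off from the division polynomial formula summed over all places, using Theorem~A and the product formula. (When $g=1$ one has $H_n=J[n]$, $\mu_n=1$, and $\varepsilon_n$ is a constant disposed of at once; the formula above is then the classical relation between the $n$-division polynomial and $\lambda_v$.) The main obstacle, I expect, is assembling the division polynomial formula and carrying out the local analysis at $o$ that yields $\varepsilon_n\to 0$; once these are in place, the Faltings step is routine.
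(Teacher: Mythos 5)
Your overall architecture is the same as the paper's: establish a ``division polynomial'' identity expressing the sum $\frac{1}{N}\sum_{q}\log|x(p)-x(q)|_v$ as $gn^2\lambda_v(p) - \lambda_{\Theta,v}(n[p-o])$ plus an $n$-dependent constant, and dispose of the N\'eron function term via Faltings's Theorem~\ref{faltings} exactly as you describe. Your discussion of the torsion and non-torsion cases for $[p-o]$ in that step is essentially verbatim the paper's argument.

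The gap lies precisely where you flag it: controlling the constant $\varepsilon_n$ (your $c_{n,v}$, $A_n$). You leave $\lambda_{\Theta,v}$ an unspecified N\'eron function and $\psi_n$ an unspecified rational function with $\mathrm{div}\,\psi_n=H_n-gn^2 o$, so there is genuinely no handle on $\varepsilon_n$; it is a free parameter of the construction until you pin down both normalisations. Your two proposed routes to $\varepsilon_n\to 0$ do not work as stated. The ``Faltings-type estimate at $o$'' cannot apply: $o$ maps to $0\in J$ which lies \emph{on} $\Theta$, so $\lambda_{\Theta,v}$ has a pole there and no boundedness statement is available at $0$. The ``average-height boundedness read off from the division polynomial formula summed over all places'' is circular: the product formula kills the local terms and (after identifying heights via Theorem~A) only yields $\sum_v n_v c_{n,v}=0$, which constrains the $c_{n,v}$ collectively but gives no bound on any single $c_{n,v}/n^2$. (Burnol's theorem supplies the average-height bound independently, but it does not feed back into a bound on $c_{n,v}$ at a given $v$ either.)

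What the paper does --- and what your proposal is missing --- is a \emph{concrete} choice that makes the constant explicitly tractable. Using the Buchstaber--Enolskii--Leykin $\sigma$-polynomial $\sigma_{N,m}$ (a Schur polynomial attached to the Weierstrass gap sequence at $o$), the paper pins down $\lambda_{\Theta,v}$ by the normalisation $\lambda_{\Theta,v}(u)+\log|\sigma_{N,m}(z_1(u),\dots,z_g(u))|_v\to 0$ as $u\to 0$, and then proves that the leading Taylor coefficient of $\iota^*[n]^*\sigma_{N,m}$ at $o$ equals $a(n)\cdot t^{w+g}(1+O(t))$ with $a(n)$ a \emph{polynomial} in $n$, because $\sigma_{N,m}$ is weighted homogeneous and $[n]^*z_i\equiv nz_i$, $\iota^*z_i=\tfrac{1}{w_i}t^{w_i}(1+O(t))$. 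Then $\log|a(n)|_v/n^2\to 0$ is immediate. (Note also that $\mu_n=\mathrm{ord}_oH_n=w+g$ is \emph{constant} in $n$, not merely $O(n)$ as you estimate; this drops out of the same computation.) Without this explicit input --- or an equivalent argument that the leading Taylor coefficient of a local equation for $H_n$ at $o$ grows at most polynomially in $n$ --- the proof does not close.
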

We note that Theorem \ref{main} implies Theorem C. Indeed, if $n \in T(p)$ then
$x(q) \neq x(p)$ for all $q \in X_n$ since $X_n$ is invariant under the
automorphism group of $x \colon X \to \pp^1_K$ over $\overline{K}$, and this
automorphism group acts transitively on each fiber of $x$.

The proof of Theorem \ref{main} is based on the existence of an identity:
\[
 \log|a(n)|_v + \frac{1}{N}
\sum_{q \in X_n \atop x(q) \neq \infty} \log|x(p)-x(q)|_v =
-\lambda_{\Theta,v}(n[p-o]) + gn^2 \lambda_v(p)
\]
of (generalized) functions on $X(\overline{K}_v)$  where $\lambda_{\Theta,v}$ is a N\'eron function with respect to $\Theta$ on $J(\overline{K}_v)$ and where $a(n)$ is a function
with polynomial growth in $n$. We obtain Theorem \ref{main} by dividing by
$gn^2$ and letting $n$ tend to infinity over $T(p)$, using Faltings's result to
see that $ \lim_{n \to \infty} \lambda_{\Theta,v}(n[p-o])/gn^2 = 0 $. Let's make these ideas precise now.
\begin{prop} \label{identityprop} Let $v$ be a place of $K$, and let $\lambda_{\Theta,v}$ be a N\'eron function with respect to $\Theta$ on $J(\overline{K}_v)$.
There exists a polynomial $a(u) \in \overline{K}_v[u]$ such that
for all integers $n$ with $n \geq g$ and for all $p \in
X(\overline{K}_v)$ with $p \notin X_n$, the equality:
\[   \log|a(n)|_v + \frac{1}{N}
\sum_{q \in X_n \atop x(q) \neq \infty} \log|x(p)-x(q)|_v =
-\lambda_{\Theta,v}(n[p-o]) + gn^2 \lambda_v(p) \]
holds. In the sum, points are counted with their multiplicity.
\end{prop}
\begin{proof} Fix an integer $n \geq g$. First of all neglect the term $\log |a(n)|_v$ on the left hand side. Write $\ell_{n,v}(p)$ as a shorthand
for $\lambda_{\Theta,v}(n[p-o])$. One can view $L=\mathcal{O}_J(\Theta)$ as an
adelic line bundle on $J$ by putting $\|1\|_{L,v}(z) =
\exp(-\lambda_{\Theta,v}(z))$ where $1$ is the canonical global section of
$\mathcal{O}_J(\Theta)$. By pullback one obtains a structure of adelic
line bundle on each $L_n=\mathcal{O}_X(X_n)
=\iota^*[n]^*\mathcal{O}_J(\Theta)$ given by $\|1\|_{L_n,v}(p) =
\exp(-\ell_{n,v}(p))$ where now $1$ is the canonical global section of
$\mathcal{O}_X(X_n)$. By \cite{zh}, Section~4.7 (see also \cite{he}, Section~4) the resulting adelic metric is  admissible; in particular $\ell_{n,v}(p)$
is equal to the admissible pairing $(p,X_n)_a$ up to an additive constant. As a result
$\ell_{n,v}$ extends to $\mathcal{D}^0(X_v)$, the space of $(0,0)$-currents on
$X_v$. As the other terms
in the equality to be proven do so as well, we try to prove the equality as an
identity in $\mathcal{D}^0(X_v)$. By Proposition \ref{properties}(iii) we are
done once we prove that both sides of the claimed equality have the same image
under $dd^c$, and the difference of both sides tends to zero as $p$ tends to
$o$ over $X(\overline{K}_v)$, avoiding $X_n$. From the observation that  $\|1\|_{L_n,v}(p) =
\exp(-\ell_{n,v}(p))$ defines an admissible metric on $L_n$ we obtain first of
all that:
\[ \ddc \ell_{n,v} = (\deg X_n) \mu_v - \delta_{X_n} = gn^2 \mu_v - \delta_{X_n}
\, .    \]
We obtain from Proposition \ref{equationslambda} that: 
\[ \ddc \lambda_v = \mu_v - \delta_o \, . \]
Finally by the Poincar\'e-Lelong equation Proposition \ref{properties}(i) we have:
\[ \ddc \frac{1}{N}
\sum_{q \in X_n \atop x(q) \neq \infty} \log|x(p)-x(q)|_v = \delta_{X_n} - gn^2
\delta_o \, ,\]
and hence the difference of both sides in the identity vanishes under $dd^c$. Now we consider the behavior of both left and right hand side as $p \to o$ avoiding $X_n$. Let $z_1,\ldots,z_g$ be local coordinates around the origin on $J(\overline{K}_v)$, and let $s \in \overline{K}_v[[z_1,\ldots,z_g]]$ be a local equation for $\Theta$ such that:
\[ \lambda_{\Theta,v}(z) + \log |s(z_1,\ldots,z_g)|_v \to 0\]
as $z \to 0$. Let  $t$ be a local equation for $o$ on $X(\overline{K}_v)$ and write $\iota^* z_j = a_j t^{m_j}(1 + O(t))$ with $a_j \in \overline{K}_v$, $m_j \in \zz_{>0}$. We have $[n]^*z_j \equiv n z_j \bmod (z_1,\ldots,z_g)^2$ and it follows that there exists a polynomial $a(u) \in \overline{K}_v[u]$ and $m \in \zz_{>0}$ such that:
\[ \iota^* [n]^* s(z_1,\ldots,z_g) = a(n) t^m(1+O(t)) \]
for all integers $n \geq g$. Note that $m$ is the multiplicity of $o$ in $X_n$; it is independent of $n$. We can assume without loss of generality that $t^{-N} =x$. We find:
\begin{align*}
\lambda_v(p) \longrightarrow & -\log|t(p)|_v \, , \\
\lambda_{\Theta,v}(n[p-o]) + \log|a(n)|_v  \longrightarrow & -m \log|t(p)|_v \, , \\
\frac{1}{N}
\sum_{q \in X_n \atop x(q) \neq \infty} \log|x(p)-x(q)|_v  \longrightarrow & -(gn^2-m)\log |t(p)|_v
\end{align*}
as $p \to o$ avoiding $X_n$. The proposition follows by combining these asymptotics.
\end{proof}
\begin{remark} Let $(X,o)$ be an elliptic curve. We can choose $\lambda_{\Theta,v}$ to be $\lambda_v$ itself; then $a(n)=n$ and the left hand side of the identity in Proposition \ref{identityprop} is equal to $\log |\psi_n(x(p))|_v$ with $\psi_n \in K[x]$ the $n$-th division polynomial of $(X,o)$ with respect to $x$ (cf. \cite{sil1}, Exercise 3.7). One finds the identity:
\[ \log |\psi_n(x(p))|_v = -\lambda_v(np) + n^2 \lambda_v(p) \]
which seems to be well known.
\end{remark}
\begin{remark} If $(X,o)$ is a hyperelliptic curve and $v$ is archimedean one finds in \cite{on}, Theorem 8.3 a holomorphic analogue of the identity in Proposition \ref{identityprop}, based on Klein's hyperelliptic sigma-function. It would be interesting to generalize the result from \cite{on} to the case of more general $(X,o)$.
\end{remark}
\begin{proof}[Proof of Theorem \ref{main}]
Let $p \in X(K) \setminus \{o\}$ be a point such that $T(p)$ is infinite, and let $v$ be a place of $K$. By Proposition \ref{identityprop} we are done once we prove
that $\log|a(n)|_v/n^2 \to 0$ as $n \to \infty$ and
$\lambda_{\Theta,v}(n[p-o])/n^2 \to 0$ as $n\to \infty$ over $T(p)$. The first
statement is immediate since $a(n)$ is a polynomial in $n$. As to the second
statement, note that it follows immediately if $[p-o]$ is torsion since then the
set of values $\lambda_{\Theta,v}(n[p-o])$ as $n$ ranges over $T(p)$ is bounded.
Assume therefore that $[p-o]$ is not torsion. Then the $n[p-o]$ with $n$ running
through $T(p)$ form an infinite set of $K$-rational points of $J-\Theta$.
Since:
\[ \frac{\lambda_{\Theta,v}(n[p-o])}{n^2} = h([p-o]) \cdot
\frac{\lambda_{\Theta,v}(n[p-o])}{h(n[p-o])} \]
with $h([p-o])>0$ Faltings's Theorem \ref{faltings} can be applied to
give:
\[ \limsup_{n \to \infty \atop n \in T(p)}
\frac{\lambda_{\Theta,v}(n[p-o])}{n^2} \leq 0 \, . \]
On the other hand $\lambda_{\Theta,v}$ is bounded from below so that:
\[ \liminf_{n \to \infty \atop n \in T(p)}
\frac{\lambda_{\Theta,v}(n[p-o])}{n^2} \geq 0 \, . \]
Theorem \ref{main} follows.
\end{proof}
We finish with an application of Theorem \ref{main}.
Let $(X,o)$ be a hyperelliptic curve over $K$ of genus $g \geq 2$ and let $y^2 =f(x)$ with $f \in O_K[x]$ monic of degree $2g+1$ be an equation for $(X,o)$ putting $o$ at infinity. Here $O_K$ denotes the ring of integers of $K$. In \cite{ca} polynomials $\psi_n \in O_K[x]$ are constructed with leading coefficient polynomially growing in $n$ and with zero divisor on $X$ equal to $X_n - m \cdot o$ where $m$ is the multiplicity of $o$ in $X_n$ (actually $m = g(g+1)/2$). The sequence $(\psi_n)_n$ is determined by a non-linear recurrence relation.
\begin{thm} \label{EW}
Assume that $p=(x(p),y(p)) \in O_K^2$ is an $o$-integral point of $X$, with $y(p) \neq 0$. Let $S$ be a finite set of places of $K$ containing the places of bad reduction for $X$ as well as the infinite ones. Then the identity:
\[ [K\colon \qq] h([p-o]) = \lim_{n \to \infty \atop n \in T(p)} \frac{1}{gn^2} \log \prod_{v \in S} |\psi_n(x(p))|^{n_v}_v \]
holds.
\end{thm}
\begin{proof} Let $\hat{\lambda}_v$ be the renormalised version of $\lambda_v$ introduced in Section \ref{hyperelliptic}. By the product formula and Proposition \ref{NT} we can write:
\[ [K\colon \qq] h([p-o]) = \sum_{v \in M_K} n_v \, \hat{\lambda}_v(p) \, . \]
By Theorem \ref{equationslambda}, for the local admissible pairing $(p,o)_a$ we have:
\[ \hat{\lambda}_v(p) = (p,o)_a + \int_{X_v} \hat{\lambda}_v \, \mu_v \, . \]
By the results in Section \ref{hyperelliptic} we have $\int_{X_v} \hat{\lambda}_v \, \mu_v =0$ if $v$ is a place of good reduction. Further we have $(p,o)_a=0$ if $v$ is a place of good reduction since $p$ is $o$-integral by assumption. We obtain that:
\[ [K\colon \qq] h([p-o]) = \sum_{v \in S} n_v \, \hat{\lambda}_v(p) \, . \]
By assumption $p$ is not a Weierstrass point of $X$. Hence according to Theorem \ref{main} we have:
\[ \hat{\lambda}_v(p) = -\frac{1}{4g(2g+1)} \log |\Delta|_v+\lim_{n \to \infty \atop n \in T(p)} \frac{1}{gn^2} \log |\psi_n(x(p))|_v   \]
for any place $v$ of $K$. By combining these two formulas and interchanging the limit and the (finite) sum we obtain the theorem, upon noting that:
\[ \sum_{v \in S} n_v \log |\Delta|_v = 0 \]
by the product formula and the fact that $\log |\Delta|_v=0$ if $v \notin S$.
\end{proof}
Theorem \ref{EW} generalizes the main result of \cite{ewNY} which is the analogue in the case of elliptic curves. For points $p$ that are not necessarily $o$-integral one has to make sure that the set $S$ contains the primes of $K$ where $p$ reduces to $o$; then the same formula works.

The limit formula in Theorem \ref{EW} gives a method, in principle, to approximate to high precision N\'eron-Tate heights of points on $X$ without exhibiting a model for the jacobian of $X$. Here one uses the results of \cite{ca} to compute recursively the sequence of division polynomials $\psi_n$. Note that in order to have the formula in Theorem \ref{EW} work in practice one has to know in advance that the gaps in the sequence $T(p)$ are bounded independent of $p$. Here is an argument that indicates that a gap between two consecutive integers in $T(p)$ should not be larger than~$g$: a gap of length $g+1$ would give rise to an element in an intersection $\Theta \cap \Theta_{[p-o]}\cap \ldots \cap \Theta_{g[p-o]}$ of $g+1$ translates of $\Theta$. These translates are distinct if $p$ is not a Weierstrass point. The intersection should be empty for dimension reasons.

\section*{Acknowledgments}
The research done for this paper was supported by VENI grant 639.033.402
from the Netherlands Organisation for Scientific Research (NWO). Part of the research was done at the Max Planck Institute in Bonn, whose hospitality is greatly acknowledged.

\newpage

\noindent Address of the author: \\  \\
Robin de Jong \\
Mathematical Institute \\
University of Leiden \\
PO Box 9512 \\
2300 RA Leiden \\
The Netherlands \\
Email: \verb+rdejong@math.leidenuniv.nl+

\end{document}